\newcommand{\bs}[1]{{\boldsymbol #1}}
\newcommand{\R}{\mathbb{R}}
\DeclareMathOperator*{\argmin}{arg\,min}
\newcommand{\be}{\mathbf{e}}
\newcommand{\bp}{\mathbf{p}}
\newcommand{\bu}{\mathbf{u}}
\newcommand{\bC}{\mathbf{C}}
\providecommand{\abs}[1]{\lvert#1\rvert}
\providecommand{\norm}[1]{\lVert#1\rVert}
\newtheorem{theorem}{Theorem}
\newtheorem{corollary}[theorem]{Corollary}
\newtheorem{lemma}[theorem]{Lemma}
\def\paragraph{\@startsection{paragraph}{4}%
  \z@\z@{-\fontdimen2\font}%
  {\normalfont\bfseries}}
\title[Solving primal plasticity problems quickly]
      {Solving primal plasticity increment problems in the time of a single predictor--corrector iteration}
\author{Oliver Sander}\thanks{The author would like to thank Carsten Gräser and Patrizio Neff for the
 helpful discussions.}
\address{Oliver Sander\\
Technische Universität Dresden\\
Institut für Numerische Mathematik\\
Zellescher Weg 12--14\\
01069 Dresden\\
Germany}
\email{oliver.sander@tu-dresden.de}
\begin{document}

\begin{abstract}
The Truncated Nonsmooth Newton Multigrid (TNNMG) method is a well-established method for the solution of strictly convex
block-separably nondifferentiable minimization problems.  It achieves multigrid-like performance even for non-smooth
nonlinear problems, while at the same time being globally convergent and without employing
penalty parameters.  We show that the algorithm can be applied to the primal problem of classical linear elastoplasticity
with hardening.  Numerical experiments show that the method is considerably faster
than classical predictor--corrector methods.  Indeed, solving an entire increment problem with
TNNMG takes less time than a single predictor--corrector iteration for the same problem.
Since the algorithm does not rely on differentiability of the objective functional, nonsmooth
yield functions like the Tresca yield function can be easily incorporated.
The method is closely related to a predictor--corrector scheme with a consistent tangent predictor and
line search.  We explain the
algorithm, prove global convergence, and show its efficiency using a standard benchmark from the literature.
\end{abstract}

\maketitle

\section{Introduction}

The equations of linear plasticity with hardening are one of the classic problems in numerical
analysis~\cite{simo_hughes:1998,han_reddy:2013}.
The main challenge is the nondifferentiability of the flow rule.  Plastic flow only happens on the
boundary of the elastic region, and its onset is nonsmooth.  While the elastic region itself has
a smooth boundary for several important yield laws, more general models like the Tresca yield
model also involve elastic regions whose boundaries are only piecewise smooth.

The equations of linear plasticity exist in two forms.  The classical form considers displacements and stresses as
primary unknowns, and the yield function appears directly in the equation.  This form, nowadays called the
{\em dual formulation}~\cite{han_reddy:2013}, is a mixed variational inequality.
Algorithms for this formulation are typically of predictor--corrector type:  Solving an elastic problem for the
displacements (predictor) alternates with solving a nonsmooth problem for the stresses
(corrector)~\cite{simo_hughes:1998}.  The second step is not
expensive, as the nonsmooth part of the equation decouples into pointwise contributions.
The predictor step, however, involves solving one global linear system of equations at each iteration.

Increased insight into the convex-analytic foundations of linear plasticity has led to rising interest in the
\emph{primal formulation}~\cite{han_reddy:2013}.
This formulation results from considering displacement and plastic strain as the
primary unknowns.  Unlike the dual formulation, the increment problem of primal linear plasticity
can be written as a strictly convex minimization problem, where the nondifferentiability now appears
in form of the support function of the elastic region, the so-called
\emph{dissipation function}.  By standard results from convex analysis, this function is always convex and
positively homogeneous, but it has always points of nondifferentiability~\cite[Chap.\,8.E]{rockafellar_wets:2010}.

The energy of such increment problems is block-separably nonsmooth, by which we mean that it has the form
\begin{equation}
\label{eq:introduction_functional}
 J : \R^N \to \R \cup \{ \infty \},
 \qquad
 J(x) = J_0(x) + \sum_{i=1}^m \varphi_i(R_ix),
\end{equation}
where $J_0:\R^N \to \R$ is convex, coercive, and twice continuously differentiable, and
the $\varphi_i : \R^{N_i} \to \R \cup \{ \infty \}$, $i=1,\dots,m$ (which contain the dissipation function)
are convex, proper, and lower semi-continuous. The $R_i : \R^N \to \R^{N_i}$ are restriction operators,
$\sum_{i=1}^m N_i = N$, and we implicitly identified $\R^N$ with $\prod_{i=1}^m \R^{N_i}$.
This minimization structure provides ways to construct new classes of solver algorithms.
One common approach is to use predictor--corrector type methods, alternating elastic problems for the displacement
with pointwise nonlinear problems for the plastic
strain~\cite{caddemi_martin:1991,martin_caddemi:1994,han_reddy:2013}.
While these algorithms were presented for von Mises plasticity, they are easily extendable
to nonsmooth dissipation functions.

On the other hand, little convergence theory appear in the literature for these methods.  \citet{caddemi_martin:1991}
\cite{martin_caddemi:1994}
prove monotonic convergence, i.e., decrease of the energy at each iteration, but this does not
imply convergence of the solver itself.  \citet{carstensen:1997} obtains global convergence
for the predictor--corrector method with an elastic predictor, but it is generally agreed upon
that this choice of predictor is not as efficient as its competitors.

A second class of algorithms exploits the fact that for the von Mises dissipation function and isotropic
elasticity, a formula for the minimizer with respect to a single plastic strain variable is
available~\cite[Prop.\,7.1]{alberty_carstensen_zarrabi:1999}.  This allows to eliminate the plastic strain
variables from the increment energy functional.  By the Moreau theorem, the resulting functional is strictly convex
and even Fr\'echet differentiable, and can therefore be minimized by quasi-Newton~\cite{alberty_carstensen_zarrabi:1999}
or slant Newton methods~\cite{gruber_valdman:2009}.  Both \cite{alberty_carstensen_zarrabi:1999} and
\cite{gruber_valdman:2009} provide proofs of the local superlinear convergence of their respective methods.

In a paper closely related to ours,
\citeauthor{carstensen:1997} proved convergence of an exact multiplicative Schwarz methods~\cite{carstensen:1997}
(a nonlinear block Gauß--Seidel method in the terminology of this paper).
Under conditions slightly stronger than ours (uniform convexity of the smooth part of the functional),
he even showed \emph{linear} global convergence. As a special case of his general framework,
he obtains the above-mentioned convergence proof of the predictor--corrector method with an elastic
predictor. On the other hand, the proof assumes that the local minimization problems are solved exactly.
Also, the constant of the linear convergence rate depends on the number of subspaces, which makes the
result highly mesh-dependent.


\bigskip

All these methods are expensive in the sense that an entire linear system of equations has to be solved at each
iteration.  The Schwarz method of \citeauthor{carstensen:1997} is the exception that proves the rule,
but it converges only very slowly, as even Gauß--Seidel methods for linear problems do.

On the other hand, nonlinear multigrid methods exist that can solve block-separably nonsmooth convex  minimization
problems with the same efficiency as linear multigrid methods for linear problems.  This claim has been demonstrated
for contact problems~\cite{wohlmuth_krause:2003,graeser_sack_sander:2009},
phase-field models~\cite{graeser:2011,graeser_sack_sander:2009},
friction problems~\cite{pipping_sander_kornhuber:2015}, and others.
We focus here on the Truncated Nonsmooth Newton Multigrid (TNNMG) method originally introduced
by \citet{graeser_kornhuber:2009} for obstacle problems, but later generalized for minimization problems
for much more general block-separable problems~\cite{graeser_sander:2017}.
The method alternates a nonlinear block Gauß--Seidel iteration
with a particular linear correction problem and a line search.  Crucially, the linear correction
problems are not solved exactly.  Rather, only a single multigrid step is performed at each
iteration.  Geometric  multigrid works best, but an algebraic multigrid step can be used just
as well, when a grid hierarchy is not available.
The method can be shown to converge to the unique minimizer from any initial iterate~\cite{graeser_sander:2017}.
It does not do any regularization, and consequently there are no artificial
regularization or penalty parameters to choose.  Moreover, eventually the method degenerates to a
linear multigrid method on the subspace of inactive degrees of freedom, and hence converges with
multigrid efficiency.

The convergence proof in~\cite{graeser_sander:2017} makes very weak assumptions
on the local block Gauß--Seidel solvers. This allows to use very inexact solvers for the local
non-smooth problems. While the present paper focuses on von Mises plasticity, where exact
minimization formulas are available~\cite{alberty_carstensen_zarrabi:1999}, the TNNMG theory
encompasses much more general cases. We expect this to allow for a large speedup also on more general
linear plasticity problems. The framework also encompasses most other models of linear
associative primal plasticity, including different hardening models, smooth and nonsmooth
yield surfaces, and certain gradient and crystal plasticity models.
This will be the subject of later work.

\bigskip

In this paper, we focus on von Mises plasticity with kinematic hardening.
To keep the paper short we only hint at Tresca yielding, isotropic hardening,
and gradient plasticity (in Section~\ref{sec:gradient_plasticity},
the Aifantis model). A more detailed investigation of these models is left for future work.
In Chapter~\ref{sec:problem_formulation},
we revisit the equations of primal linear plasticity in their strong and weak forms.
In Chapter~\ref{sec:discrete_problem}, we discretize in time and space, and deduce the algebraic increment minimization
problem.  This is all standard.

In Chapter~\ref{sec:tnnmg_method} we then introduce the TNNMG method.  We first state its general form
and convergence results, and then apply them to the case of primal plasticity with hardening.
We discuss choices for the nonlinear smoother for von Mises and Tresca plasticity, and introduce
a trick to make the linear correction problems positive definite.
In Section~\ref{sec:predictor_corrector_methods}, we discuss the close relationship between
the TNNMG method and the predictor--corrector method with the consistent tangent predictor and
a line search \cite{martin_caddemi:1994}.
Finally, we give a glimpse of how the same method can be applied
to the seemingly much more challenging gradient-plasticity problem.

Chapter~\ref{sec:numerical_experiments} gives numerical experiments.  We use the plastic deformation of a square
with a hole, and we compare the TNNMG solver with the predictor--corrector method with a consistent tangent predictor.
On average,
the TNNMG method needs roughly twice to three times as many iterations per time step, but each iteration is much cheaper.

\section{Problem formulation}
\label{sec:problem_formulation}

We briefly revisit here the primal problem of linear plasticity with kinematic and/or isotropic hardening
as presented in~\cite{han_reddy:2013}.  We use von Mises and Tresca yield functions as examples,
but our approach also works for other smooth or nonsmooth associative flow laws.

\subsection{Strong formulation}

Consider a $d$-dimensional linearly elastoplastic object, with reference configuration given by a
domain $\Omega \subset \R^d$ with Lipschitz boundary $\Gamma$.  We denote by $\bu : \Omega \to \R^d$
the displacement field, and use the linear strain
\begin{equation*}
 \bs{\varepsilon}(\bs{u}) \colonequals \frac{1}{2}\big(\nabla \bs{u} + (\nabla \bs{u})^T \big)
\end{equation*}
to measure the change of shape. This strain $\bs{\varepsilon}$ is split additively
\begin{equation*}
 \bs{\varepsilon} = \mathbf{e} + \mathbf{p},
\end{equation*}
where $\mathbf{e}$ is the elastic strain, i.e., the strain that leads to stresses, and $\mathbf{p}$ is the plastic strain,
the one that tracks the irreversible deformation.  We will use $\mathbb{S}^d$ to denote the set of symmetric $d \times d$ matrices,
and $\mathbb{S}^d_0$ for the subset of symmetric matrices with vanishing trace.  The plastic strain $\mathbf{p}$
is a field of symmetric tensors, and it is assumed that $\mathbf{p}$ is pointwise trace-free,
$\mathbf{p} : \Omega \to \mathbb{S}^d_0$.
In the context of the primal formulation of plasticity, $\mathbf{u}$ and $\mathbf{p}$ are unknowns of the problem,
and $\bs{\varepsilon}$ and $\mathbf{e}$ are computed from $\mathbf{u}$ and $\mathbf{p}$.
In particular, this means that we will need initial conditions for $\mathbf{u}$ and $\mathbf{p}$.
For $\mathbf{u}$, we will additionally need standard Dirichlet and Neumann boundary conditions,
on relatively open subsets $\Gamma_D$ and $\Gamma_N$ of the boundary, respectively.

In linear plasticity, we assume that the stresses $\bs{\sigma}$ depend linearly on the elastic strain $\mathbf{e}$.
For a fixed plastic strain field, this means that $\bs{\sigma}$ is given by Hooke's law
\begin{equation*}
 \bs{\sigma} = \bs{C} : \bs{e} = \bs{C} : (\bs{\varepsilon}-\bs{p}),
\end{equation*}
where $\bs{C}$ is the forth-order elasticity tensor, with the usual symmetry properties~\cite[Chap.\,2.3]{han_reddy:2013}.
If the material is isotropic, which we assume for simplicity from now on, Hooke's law takes the form
\begin{equation}
\label{eq:stvenant_kirchhoff}
\bs{\sigma} = \lambda \operatorname{tr}(\bs{\varepsilon} - \bs{p})\mathbf{I} + 2\mu (\bs{\varepsilon} - \bs{p}),
\end{equation}
with $\mathbf{I}$ the $d \times d$ identity matrix.
The  scalar quantities $\lambda, \mu > 0$ are the Lam\'e constants, which describe the elastic properties
of the material.

Finally, the law of momentum conservation leads to the equilibrium equation
\begin{equation}
 \label{eq:equilibrium_equation}
 - \operatorname{div} \bs{\sigma} = \mathbf{f},
\end{equation}
where $\mathbf{f} : \Omega \to \R^d$ is an externally given volume force field.  In~\eqref{eq:equilibrium_equation},
we have omitted the inertial term, as we are only interested in quasistatic processes.  We consider this equation
on the time interval $[0,T]$.

\subsubsection{Flow rules}

The classical theory of plasticity postulates an open
region $\mathcal{E}$ in the space of stresses such that the object behaves purely elastically wherever
the stress $\bs{\sigma}$ is within that region.  The convexity of this region follows from the law of maximal plastic work
\cite[p.\,57]{han_reddy:2013}.
Plastic behavior happens only if $\bs{\sigma}$ is on the boundary $\mathcal{B}$ of $\mathcal{E}$.
This boundary is called the {\em yield surface}.  It is typically described
as the zero set of a scalar function $\phi$, called the \emph{yield function}
\begin{equation*}
 \mathcal{B}
 =
 \{ \bs{\sigma} \in \mathbb{S}^d \; : \; \phi(\bs{\sigma}) = 0 \}.
\end{equation*}

\paragraph{Example: The von Mises yield criterion}

The following frequently used yield criterion is based on experimental evidence that
the plastic flow does not depend on the spherical part of the stress.  Therefore, the von Mises yield criterion
only depends on the deviatoric stress
\begin{equation*}
 \bs{\sigma}^D \colonequals \bs{\sigma} - \frac{1}{d}(\operatorname{tr} \bs{\sigma}) \mathbf{I}.
\end{equation*}
In the von Mises theory, a material yields, i.e., exhibits plastic behavior, if the elastic shear energy density, which is a multiple of the invariant
 \begin{equation*}
  J_2(\bs{\sigma}^D)
  \colonequals
  \frac{1}{2} \norm{\bs{\sigma}^D}^2,
 \end{equation*}
reaches a critical value.  The resulting yield function is
\begin{equation}
\label{eq:von_mises_yield_function}
  \phi(\bs{\sigma})
  \colonequals
  \sqrt{J_2(\bs{\sigma}^D)} - \sigma_c
  =
  \norm{\bs{\sigma}^D} - \sigma_c.
\end{equation}
The critical value $\sigma_c$ for the yield function is obtained experimentally from the uniaxial yield stress
in tension $\sigma_0$.

\paragraph{Example: The Tresca yield criterion}

According to the Tresca yield criterion, a material flows plastically when the maximum shear stress
reaches the value $\frac{1}{2}\sigma_0$.  In principal stresses $\sigma_1, \dots, \sigma_d$, the maximum shear stress is given by
\begin{equation*}
 \frac{1}{2} \max_{i,j=1,\dots,d} \abs{ \sigma_i - \sigma_j}.
\end{equation*}
Consequently, the Tresca yield function is
\begin{equation*}
 \phi(\bs{\sigma})
 \colonequals
 \max_{i,j=1,\dots,d} \abs{ \sigma_i - \sigma_j} - \sigma_0.
\end{equation*}
Note that the corresponding yield surface $\mathcal{B} = \{ \bs{\sigma} \in \mathbb{S}^d : \phi(\bs{\sigma}) = 0 \}$ is not differentiable.

\bigskip

The elastic region $\mathcal{E}$ with boundary $\mathcal{B}$ also governs how the plastic strain $\mathbf{p}$ evolves with time.
Let $N_{\mathcal{E}}(\bs{\sigma})$ be the normal cone of $\mathcal{E}$
at $\bs{\sigma}$.  Then, by the principle of maximum plastic work~\cite[Chap.\,3.2]{han_reddy:2013} it follows that
the time evolution of $\mathbf{p}$ satisfies
\begin{equation}
\label{eq:flow_rule}
 \dot{\mathbf{p}} \in N_{\mathcal{E}}(\bs{\sigma}).
\end{equation}
This is equivalent to
\begin{equation*}
 \dot{\mathbf{p}} \in \lambda \partial \phi(\bs{\sigma}),
\end{equation*}
where $\partial \phi$ is the subdifferential of $\phi$ at $\bs{\sigma}$~\cite{rockafellar_wets:2010}.
The quantity $\lambda$ (not to be confused with the Lam\'e parameter of the same name) is called the {\em plastic multiplier}.

\subsubsection{Hardening}

The model as described so far is known as {\em perfect plasticity}.  More general models do not view the
elastic region as fixed, but allow it to evolve with time as well.  Such a behavior is called \emph{hardening}.
We include two simple hardening models here, but emphasize that our solver approach is not restricted to these models in particular.

Kinematic hardening supposes that the initial yield surface can move around in stress space.
Its current position is described by a symmetric $d \times d$ matrix $\bs{\alpha}$.
In contrast, isotropic hardening allows the elastic region to change its size over
time.  This change in size is tracked by a scalar nonnegative variable $\eta$.  To describe both effects in one formulation,
we introduce the generalized plastic strain
\begin{equation*}
 \mathsf{P} \colonequals(\mathbf{p}, \bs{\alpha}, \eta).
\end{equation*}
From thermodynamic considerations we know that there are conjugate variables to $\mathbf{p}$, $\bs{\alpha}$, and $\eta$
which we call {\em generalized stresses}, and we denote them by
\begin{equation*}
 \mathsf{\Sigma} \colonequals (\bs{\sigma}, \mathbf{a}, g).
\end{equation*}
If the hardening behavior is linear, then there are two nonnegative scalars $k_1$ and $k_2$ such
that
\begin{equation*}
 \mathbf{a} = - k_1 \bs{\alpha},
 \qquad
 g = - k_2 \eta
\end{equation*}
(more general linear relationships between $\mathbf{a}$ and $\bs{\alpha}$ are usually not considered).
Since $\eta$ is nonnegative we obtain that $g \le 0$.
A yield surface $\mathcal{B}$ in the space of stresses $\mathbb{S}^d$ depending on parameters $\bs{\alpha}$, $\eta$
can be written as a static surface in the space of generalized stresses.
A corresponding yield function takes the form
\begin{equation}
\label{eq:yield_function_in_generalized_stress_space}
\Phi(\bs{\sigma}, \mathbf{a}, g) = \phi(\bs{\sigma + \mathbf{a}}) + G(g) - \sigma_c,
\end{equation}
where $\phi$ is a yield function of perfect plasticity, and $G$ is a monotone increasing function satisfying $G(0) = 0$.
The elastic region in the space of generalized stresses will be denoted by
\begin{equation*}
 K
 \colonequals
 \{ \mathsf{\Sigma} \; : \; \Phi(\mathsf{\Sigma}) \le 0 \}.
\end{equation*}

The evolution of the additional variables is governed by extension of the flow law~\eqref{eq:flow_rule}.
With the new variables, it now takes the form
\begin{equation*}
\dot{\mathsf{P}} \in N_K (\mathsf{\Sigma}),
\end{equation*}
or, alternatively,
\begin{equation*}
\dot{\mathsf{P}} \in \lambda \partial \Phi(\mathsf{\Sigma}),
\end{equation*}
for a scalar plastic multiplier $\lambda$.
If, as in~\eqref{eq:yield_function_in_generalized_stress_space}, the yield function is such that
$\frac{\partial \Phi}{\partial \bs{\sigma}} = \frac{\partial \Phi}{\partial \bs{a}}$,
it turns out that $\mathbf{p}$ evolves exactly like $\bs{\alpha}$, and hence
the two variables are usually identified.

\subsubsection{Dissipation functions}
The abstract flow rule
\begin{equation}
\label{eq:abstract_flow_rule}
 \dot{\mathsf{P}} \in N_K(\mathsf{\Sigma})
\end{equation}
forms the basis of the dual formulation of plasticity.  To arrive at the primal formulation,
we need to write $\mathsf{\Sigma}$ as a function of $\dot{\mathsf{P}}$.  For this we introduce the dissipation function $D$
as the support function of the elastic set $K$, i.e.,
\begin{equation*}
 D(\mathsf{P}) \colonequals \sup_{\mathsf{\Sigma} \in \mathcal{E}} \{ \mathsf{P} \diamond \mathsf{\Sigma} \}
 \in \R \cup \{ \infty \},
\end{equation*}
with the scalar product
\begin{equation*}
 \mathsf{P} \diamond \mathsf{\Sigma}
 \colonequals
 \mathbf{p} : \bs{\sigma} + \bs{\alpha} : \mathbf{a} + \eta g.
\end{equation*}
As we have identified $\mathbf{p}$ with $\bs{\alpha}$, the generalized plastic strain is now
$\mathsf{P} = (\mathbf{p}, \eta)$, and we use the scalar product
\begin{equation*}
 \mathsf{P} \diamond \mathsf{\Sigma}
 \colonequals
 \mathbf{p} : \bs{\sigma} + \eta g.
\end{equation*}

The following standard result is crucial: It allows that methods of convex optimization can be used
to solve primal plasticity problems.
\begin{lemma}[{\cite[Thm.\,8.24]{rockafellar_wets:2010}}]
\label{lem:dissipation_function_properties}
The function $D$ is convex, positively homogeneous, lower semicontinuous, and proper.
\end{lemma}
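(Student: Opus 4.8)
The plan is to prove all four properties directly from the definition of $D$ as a support function, since each follows from standard supremum arguments. Recall that
\begin{equation*}
 D(\mathsf{P}) = \sup_{\mathsf{\Sigma} \in K} \{ \mathsf{P} \diamond \mathsf{\Sigma} \},
\end{equation*}
so $D$ is the pointwise supremum of the family of linear functionals $\mathsf{P} \mapsto \mathsf{P} \diamond \mathsf{\Sigma}$, indexed by $\mathsf{\Sigma} \in K$. (Here I take the support function over $K$ rather than its boundary, matching the elastic set notation.)

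First I would establish \emph{convexity and lower semicontinuity} together, since both are immediate from the supremum structure. Each map $\mathsf{P} \mapsto \mathsf{P} \diamond \mathsf{\Sigma}$ is linear, hence convex and continuous. A pointwise supremum of convex functions is convex, and a pointwise supremum of lower semicontinuous functions is lower semicontinuous; this handles two of the four claims with essentially no computation. Next I would verify \emph{positive homogeneity}: for any $t > 0$, using linearity in the first argument of $\diamond$, one has $\mathsf{P} \diamond \mathsf{\Sigma} $ scaling linearly in $\mathsf{P}$, so $D(t\mathsf{P}) = \sup_{\mathsf{\Sigma}} t (\mathsf{P} \diamond \mathsf{\Sigma}) = t\, D(\mathsf{P})$, and separately $D(0) = \sup_{\mathsf{\Sigma} \in K} 0 = 0$, covering the case $t = 0$.

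The remaining property, \emph{properness}, is where the only genuine content lies, and I expect it to be the main obstacle. Properness requires that $D$ never takes the value $-\infty$ and is not identically $+\infty$. The first is automatic: since $K$ is nonempty and contains the origin of generalized stress space (as $\Phi(0) = \phi(0) - \sigma_c \le 0$ for the relevant yield functions, so $\mathsf{\Sigma} = 0 \in K$), we get $D(\mathsf{P}) \ge \mathsf{P} \diamond 0 = 0 > -\infty$ for every $\mathsf{P}$. The subtler direction is that $D \not\equiv +\infty$, equivalently that $D(0)$ is finite, which we already computed to be $0$; hence $D$ is proper. The delicate point worth flagging is that $D$ may legitimately equal $+\infty$ at some points $\mathsf{P}$ when $K$ is unbounded in certain directions (for instance in perfect plasticity, where the elastic set is unbounded along the spherical stress direction, forcing the trace-free constraint on $\mathbf{p}$); this is why the codomain is $\R \cup \{\infty\}$ rather than $\R$, and it does not conflict with properness so long as $D$ is finite somewhere.

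Since this lemma is quoted directly as \cite[Thm.\,8.24]{rockafellar_wets:2010}, the cleanest route is simply to invoke that theorem, which states precisely that the support function of any nonempty set is convex, positively homogeneous, lower semicontinuous, and proper; the sketch above indicates how one would reconstruct the argument if needed. The one input that must be checked in our setting is nonemptiness of $K$, which follows from $0 \in K$ as noted above.
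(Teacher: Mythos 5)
Your proposal is correct and matches the paper, which offers no proof of its own beyond citing \cite[Thm.\,8.24]{rockafellar_wets:2010}; your sketch is exactly the standard support-function argument underlying that theorem (pointwise supremum of linear maps for convexity and lower semicontinuity, scaling for positive homogeneity, and nonemptiness of $K$ via $0 \in K$ for properness). You also rightly resolve the paper's notational slip of writing $\sup_{\mathsf{\Sigma} \in \mathcal{E}}$ while calling $D$ the support function of $K$, and correctly flag that $D$ taking the value $+\infty$ is compatible with properness.
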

With this, the abstract flow law~\eqref{eq:abstract_flow_rule} can be solved for the generalized stresses $\mathsf{\Sigma}$
\begin{equation}
\label{eq:abstract_primal_flow_rule}
 \mathsf{\Sigma} \in \partial D(\dot{\mathsf{P}}),
\end{equation}
see~\cite[Lem.\,4.2]{han_reddy:2013}.

\paragraph{Example: The von Mises dissipation function \cite[p.\,113]{han_reddy:2013}}

Consider first the case of kinematic hardening only, i.e., $k_2=0$.  Then, combining~\eqref{eq:von_mises_yield_function}
with~\eqref{eq:yield_function_in_generalized_stress_space} yields the admissible set
\begin{equation*}
 K
  =
 \big\{ (\bs{\sigma}, \mathbf{a}) \; : \; \norm{\bs{\sigma}^D + \mathbf{a}^D} - \sigma_c \le 0 \big\}.
\end{equation*}
We introduce the new variable $\overline{\mathbf{a}} \colonequals \bs{\sigma} + \mathbf{a}$, and rewrite
the admissible set as
\begin{equation*}
 K
  =
 \big\{ \overline{\mathbf{a}} \; : \; \norm{\overline{\mathbf{a}}^D} - \sigma_c \le 0 \big\}.
\end{equation*}
The corresponding dissipation function is
\begin{align}
 \nonumber
 \mathbf{p} \in \mathbb{S}_0^d,
 \qquad
 D(\mathbf{p})
 & =
 \sup \big \{ \overline{\mathbf{a}} : \mathbf{p} \; : \; \norm{\overline{\mathbf{a}}^D} \le \sigma_c \big \} \\
 \nonumber
 & =
 \sup \big \{ \overline{\mathbf{a}}^D : \mathbf{p} \; : \; \norm{\overline{\mathbf{a}}^D} \le \sigma_c \big \} \\
 \nonumber
 & =
 \sup \big \{ \norm{\overline{\mathbf{a}}^D} \norm{\mathbf{p}} \; : \; \norm{\overline{\mathbf{a}}^D} \le \sigma_c \big \} \\
\label{eq:von_mises_dissipation_no_isotropic_hardening}
 & =
 \sigma_c \norm{\mathbf{p}}.
\end{align}

For the case with isotropic hardening, and with the assumption $G(g) = g$, the admissible set is
\begin{align*}
 K
 & =
 \big\{ \mathsf{\Sigma} \; : \; \phi(\mathsf{\Sigma}) = \phi(\bs{\sigma} + \mathbf{a}) + g - \sigma_c \le 0 \big\}.
\end{align*}
Inserting the definition~\eqref{eq:von_mises_yield_function} of the von Mises yield function and the variable
substitution $\overline{\mathbf{a}} \colonequals \bs{\sigma} + \mathbf{a}$ gives
\begin{align*}
 K
 & =
 \big\{ (\overline{\mathbf{a}}, g) \; : \; \norm{\overline{\mathbf{a}}^D} + g - \sigma_c \le 0, \; g \le 0 \big\}.
\end{align*}
To compute the dissipation function for this case, we write
\begin{align*}
 D(\mathbf{p}, \eta)
 & =
 \sup \big\{ \bar{\mathbf{a}} : \mathbf{p} + g \eta
    \; : \;
    \norm{\bar{\mathbf{a}}^D} \le \sigma_c - g, \quad g \le 0 \big\} \\
 & =
 \sup_{g \le 0} \Big \{ \sup_{\bar{\mathbf{a}} \in \mathbb{S}^d} \big\{ \bar{\mathbf{a}} : \mathbf{p}
     \; : \; \norm{\bar{\mathbf{a}}^D} \le \sigma_c - g \big\} + g\eta \Big\}.
\end{align*}
From~\eqref{eq:von_mises_dissipation_no_isotropic_hardening} we know that the inner $\sup$
is simply $(\sigma_c - g)\norm{\mathbf{p}}$.  Using this we get
\begin{align*}
 D(\mathbf{p}, \eta)
 & =
 \sup_{g \le 0} \big \{ (\sigma_c - g)\norm{\mathbf{p}} + g\eta \big\} \\
 & =
 \sup_{g \le 0} \big \{ \sigma_c \norm{\mathbf{p}} + g (\eta - \norm{\mathbf{p}}) \big\},
\end{align*}
from which we deduce that
\begin{equation}
\label{eq:von_mises_dissipation_with_isotropic_hardening}
 (\mathbf{p}, \eta) \in \mathbb{S}_0^d \times \R_+,
 \qquad
 D(\mathbf{p}, \eta)
 =
 \begin{cases}
  \sigma_c\norm{\mathbf{p}}  & \text{if $\norm{\mathbf{p}} \le \eta$}, \\
  + \infty & \text{otherwise}.
 \end{cases}
\end{equation}
Note that, unlike in the case without isotropic hardening, the dissipation function now takes values
in the extended real line $\R \cup \{ \infty \}$.

\paragraph{Example: The Tresca dissipation function}
If $k_2 = 0$, i.e., there is only kinematic hardening, the admissible set for the Tresca yield criterion is
\begin{equation*}
 K
  =
 \big\{ \bar{\mathbf{a}} \in \mathbb{S}^d \; : \; \max_{i,j} \,\abs{\bar{a}_i - \bar{a}_j} - \sigma_c \le 0 \big\}.
\end{equation*}
Here, $\bar{a}_i$ denotes the $i$-th eigenvalue of the symmetric matrix $\bar{\mathbf{a}}$.
For the relevant cases $d=2$ and $d=3$, the support function for this set is the scaled spectral radius
\begin{equation*}
D(\mathbf{p})
=
\sigma_c \rho(\mathbf{p})
\colonequals
\sigma_c \max \big\{ \abs{p_1}, \dots, \abs{p_d} \big\},
\end{equation*}
where $p_1, \dots, p_d$ are the eigenvalues of $\mathbf{p}$.  Since we could not find a proof for this
result in the literature, we give our own in the appendix of this manuscript.

With additional isotropic hardening, the admissible set is
\begin{equation*}
 K
  =
 \big\{ (\bar{\mathbf{a}},g)
      \; : \; \max_{i,j} \,\abs{\bar{a}_i - \bar{a}_j} +g - \sigma_c \le 0 \big\}.
\end{equation*}
Using the same argument as for the von Mises flow rule,
the dissipation function for this is (again only for $d=2,3$)
\begin{equation}
\label{eq:tresca_dissipation_with_isotropic_hardening}
 (\mathbf{p}, \eta) \in \mathbb{S}_0^d \times \R_+,
 \qquad
 D(\mathbf{p}, \eta)
 =
 \begin{cases}
  \sigma_c \rho(\mathbf{p})  & \text{if $\rho(\mathbf{p}) \le \eta$}, \\
  + \infty & \text{else}.
 \end{cases}
\end{equation}

\subsection{Weak formulation}
For the weak formulation we first specify the function spaces. Displacements $\mathbf{u}$ will be taken from the space
\begin{equation*}
V
\colonequals
(H_{\Gamma}^1(\Omega))^d
=
\big\{\bs{v} \in H^1(\Omega,\R^d)\; : \; \bs{v}_{|\Gamma_D}=0\big\},
\end{equation*}
where $\Gamma_D$ is the Dirichlet boundary.  For simplicity we assume that only homogeneous Dirichlet values occur.
To define the space of plastic strains we introduce
\begin{equation*}
Q_0
\colonequals
L^2(\Omega,\mathbb{S}^d_0).
\end{equation*}
Finally, for the isotropic hardening parameter we need
\begin{equation*}
M=L^2(\Omega).
\end{equation*}
We combine these spaces to the product space
\begin{equation*}
W=V\times Q_0 \times M.
\end{equation*}

\subsubsection{Weak formulation}

Recall the flow law of elastoplasticity~\eqref{eq:abstract_primal_flow_rule}
\begin{equation*}
\mathsf{\Sigma} \in \partial D(\dot{\mathsf{P}}), \quad \mathsf{\Sigma} = (\bs{\sigma}, \mathbf{a}, g).
\end{equation*}
By Lemma~\ref{lem:dissipation_function_properties}, $D$ is convex; in other words if
$(\dot{\bs{p}},\dot{\bs{\alpha}}, \dot{\eta}) \in \operatorname{dom} D$, then
\begin{align*}
D(\tilde{\bs{p}},\tilde{\eta}) & \ge D(\dot{\bs{p}}, \dot{\eta}) + \bs{\sigma} : (\tilde{\bs{p}}
- \dot{\bs{p}}) + \mathbf{a} : (\tilde{\bs{\alpha}} - \dot{\bs{\alpha}}) + g (\tilde{\eta} - \dot{\eta})
\end{align*}
for all
\begin{align*}
(\tilde{\mathbf{p}}, \tilde{\bs{\alpha}}, \tilde{\eta}) \in \mathbb{S}^d_0 \times \mathbb{S}^d_0 \times \R.
\end{align*}
Using $\mathbf{a} = -k_1 \bs{\alpha}$, $g = -k_2\eta$ and $\bs{\alpha} = \bs{p}$, this simplifies to
\begin{align*}
D(\tilde{\bs{p}},\tilde{\eta})
& \ge
D(\dot{\bs{p}}, \dot{\eta}) + (\bs{\sigma} - k_1 \bs{p}) : (\tilde{\bs{p}} -\dot{\bs{p}})
 - k_2 \eta (\tilde{\eta} - \dot{\eta})
\qquad
\forall (\tilde{\mathbf{p}}, \tilde{\eta}) \in \mathbb{S}^d_0 \times \R.
\end{align*}
Integrating over $\Omega$, and using the elastic law
$\bs{\sigma} = \bC (\bs{\varepsilon}(\mathbf{u})-\mathbf{p})$ we get
\begin{multline}
\label{eq:integrated_convexity}
\int_\Omega D(\tilde{\bs{p}}, \tilde{\eta})\,dx
\ge
\int_\Omega D(\dot{\bs{p}}, \dot{\eta})\,dx \\
  +
\int_\Omega \Big[ \bC (\bs{\varepsilon}(\mathbf{u})-\mathbf{p}) : (\tilde{\bs{p}} - \dot{\bs{p}})
- k_1 \bs{p} : (\tilde{\bs{p}} - \dot{\bs{p}}) - k_2 \eta (\tilde{\eta} - \dot{\eta}) \Big]\,dx
\qquad
\forall (\tilde{\mathbf{p}}, \tilde{\eta}) \in Q_0 \times M.
\end{multline}

Then, take the scalar product of $- \operatorname{div} \bs{\sigma} = \mathbf{f}$ with $\tilde{\mathbf{u}} - \dot{\mathbf{u}}$
for any $\tilde{\mathbf{u}} \in V$, integrate over $\Omega$ and use $\bs{\sigma} = \bC (\bs{\varepsilon}(\mathbf{u})-\mathbf{p})$
to obtain
\begin{equation}
\label{eq:weak_elasticity}
 \int_\Omega \bC(\bs{\varepsilon}(\mathbf{u}) - \mathbf{p}) : ( \bs{\varepsilon}(\tilde{\mathbf{u}}) - \bs{\varepsilon}(\dot{\mathbf{u}}))\,dx
 =
 \int_\Omega \mathbf{f} \cdot(\tilde{\mathbf{u}} - \dot{\mathbf{u}})\,dx
 \qquad
 \forall \tilde{\mathbf{u}} \in V.
\end{equation}
Finally, add~\eqref{eq:integrated_convexity} and~\eqref{eq:weak_elasticity} to obtain the following variational inequality:
Find $\mathbf{w}=(\bs{u}, \bs{p}, \eta) : [0,T] \to W$, $\mathbf{w}(\bs{0})=\bs{0}$,
so that for all $t \in (0,T)$ we have $\dot{\mathbf{w}}(t) \in W$ and $j(\dot{\mathbf{w}}(t)) < \infty$, and
\begin{equation}
\label{eq:variational_inequality}
a(\mathbf{w}(t), \tilde{\mathbf{w}} - \dot{\mathbf{w}}(t)) + j(\tilde{\mathbf{w}}) - j(\dot{\mathbf{w}}(t))
\ge
\langle l(t), \tilde{\mathbf{w}} - \dot{\mathbf{w}}(t) \rangle
\quad \forall \tilde{\mathbf{w}} \in W.
\end{equation}
In this variational inequality we have used the notation
\begin{align}
\label{eq:bilinear_form}
a(\mathbf{w},\tilde{\mathbf{w}})
 & \colonequals \int_\Omega \Big[\bs{C}(\bs{\varepsilon}(\bs{u}) - \bs{p}) : (\bs{\varepsilon}(\tilde{\bs{u}}) - \tilde{\bs{p}})
   + k_1 \bs{p} : \tilde{\bs{p}} + k_2 \eta \tilde{\eta} \Big]\,dx,\\
\nonumber
j(\tilde{\mathbf{w}}) & \colonequals \int_\Omega D(\tilde{\bs{p}}, \tilde{\eta})\,dx,
\end{align}
and the right hand side $l$ is given by
\begin{equation*}
\langle l(t), \tilde{\mathbf{w}} \rangle = \int_\Omega \bs{f}(t) \cdot \tilde{\bs{u}}\,dx.
\end{equation*}
Existence of a unique solution follows from Theorem~7.3 in~\cite{han_reddy:2013}, provided the bilinear form
is elliptic.  This is the content of the following result.
\begin{theorem}
\label{thm:ellipticity_of_the_bilinear_form}
 Let $k_1,k_2 >0$.
 Then the bilinear form $a(\cdot,\cdot)$ defined in~\eqref{eq:bilinear_form} is elliptic in the sense that
 \begin{equation*}
  a(\mathbf{w}, \mathbf{w})
  \ge
  c_0 (\norm{\mathbf{u}}^2_V + \norm{\mathbf{p}}^2_Q + \norm{\eta}^2)
 \end{equation*}
 for all $\mathbf{w} = (\mathbf{u}, \mathbf{p}, \eta) \in W$ such that $\mathbf{w}$ is
 in the effective domain $\operatorname{dom} D$ of the dissipation function almost
 everywhere in $\Omega$.
\end{theorem}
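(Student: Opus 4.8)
The plan is to reduce the estimate to two ingredients: the pointwise coercivity of the elasticity tensor $\bs{C}$ and Korn's inequality on $V$. Abbreviating the elastic strain by $\bs{e} \colonequals \bs{\varepsilon}(\bs{u}) - \bs{p}$, the quadratic form reads
\begin{equation*}
 a(\mathbf{w}, \mathbf{w}) = \int_\Omega \bs{C}\bs{e} : \bs{e}\,dx + k_1 \norm{\bs{p}}_Q^2 + k_2 \norm{\eta}^2.
\end{equation*}
Since the material is isotropic, $\bs{C}\bs{M} : \bs{M} = \lambda(\operatorname{tr}\bs{M})^2 + 2\mu\norm{\bs{M}}^2 \ge 2\mu\norm{\bs{M}}^2$ for every symmetric $\bs{M}$, so with $c_{\bs{C}} \colonequals 2\mu > 0$ the elastic term is bounded below by $c_{\bs{C}}\norm{\bs{e}}_{L^2}^2$. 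The two hardening terms are already controlled because $k_1, k_2 > 0$, so the only quantity left to recover is the full displacement norm $\norm{\bs{u}}_V^2$.

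First I would invoke Korn's inequality: as $\Gamma_D$ carries positive surface measure, there is a constant $C_K > 0$ with $\norm{\bs{u}}_V^2 \le C_K\norm{\bs{\varepsilon}(\bs{u})}_{L^2}^2$ for all $\bs{u} \in V$. Writing $\bs{\varepsilon}(\bs{u}) = \bs{e} + \bs{p}$ and applying the weighted Young inequality $\norm{\bs{e}+\bs{p}}^2 \le (1+\delta)\norm{\bs{e}}^2 + (1+\delta^{-1})\norm{\bs{p}}^2$, valid pointwise for any $\delta > 0$ and then integrated, yields
\begin{equation*}
 \norm{\bs{u}}_V^2 \le C_K\big[(1+\delta)\norm{\bs{e}}_{L^2}^2 + (1+\delta^{-1})\norm{\bs{p}}_Q^2\big].
\end{equation*}
Rearranging this as a lower bound for $\norm{\bs{e}}_{L^2}^2$ and using the identity $(1+\delta^{-1})/(1+\delta) = \delta^{-1}$, I obtain
\begin{equation*}
 c_{\bs{C}}\norm{\bs{e}}_{L^2}^2 + k_1\norm{\bs{p}}_Q^2
 \ge
 \frac{c_{\bs{C}}}{C_K(1+\delta)}\norm{\bs{u}}_V^2 + \Big(k_1 - \frac{c_{\bs{C}}}{\delta}\Big)\norm{\bs{p}}_Q^2.
\end{equation*}

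The decisive step --- and the one where $k_1 > 0$ is used --- is that the coefficient $k_1 - c_{\bs{C}}/\delta$ must remain strictly positive; this is arranged simply by fixing any $\delta > c_{\bs{C}}/k_1$. Adding back $k_2\norm{\eta}^2$ and taking $c_0 \colonequals \min\{\, c_{\bs{C}}/(C_K(1+\delta)),\ k_1 - c_{\bs{C}}/\delta,\ k_2\,\} > 0$ then gives the claimed inequality. The main obstacle is thus not analytic but bookkeeping: the elastic energy has to be shared between recovering $\norm{\bs{u}}_V^2$ through Korn's inequality and leaving a positive multiple of $\norm{\bs{p}}_Q^2$, which is possible exactly because the kinematic hardening modulus $k_1$ is positive. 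Korn's inequality is the only genuinely functional-analytic ingredient here. Finally, I would note that the hypothesis $\mathbf{w} \in \operatorname{dom} D$ is never used in this argument: when $k_1, k_2 > 0$ the form is already elliptic on all of $W$, so the stated restriction is automatically satisfied and may be dropped.
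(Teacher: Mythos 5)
Your proof is correct, and since the paper states Theorem~\ref{thm:ellipticity_of_the_bilinear_form} without giving its own proof --- deferring to Han and Reddy (Thm.~7.3 and Chap.~7.1) --- there is nothing to diverge from: your argument (pointwise coercivity $\bC M : M \ge 2\mu \norm{M}^2$ for symmetric $M$, Korn's inequality on $V$ with $\Gamma_D$ of positive measure, and the weighted Young splitting with any fixed $\delta > 2\mu/k_1$) is exactly the standard one used in that reference, and the bookkeeping, including the identity $(1+\delta^{-1})/(1+\delta) = \delta^{-1}$, checks out. Your closing observation is also accurate: for $k_1, k_2 > 0$ the restriction to $\operatorname{dom} D$ is indeed superfluous, and the paper carries it only because the analogous statements for purely kinematic or purely isotropic hardening mentioned right after the theorem genuinely need it --- e.g.\ with $k_1 = 0$ the constraint $\norm{\mathbf{p}} \le \eta$ from~\eqref{eq:von_mises_dissipation_with_isotropic_hardening} is what lets $k_2 \eta^2$ control $\norm{\mathbf{p}}_Q^2$.
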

Corresponding results hold for the cases with only kinematic or only isotropic hardening
\cite[Chap.\,7.1]{han_reddy:2013}.

\section{Discrete problem}
\label{sec:discrete_problem}

\subsection{Time discretization}

We now divide the time interval $[0, T]$ into $N$ equal subintervals of size $\Delta t = T/N$, with time points $t_n$,
$0 \leq n \leq N$.  We denote by $\mathbf{w}_n \in W$ an approximation of the solution $\mathbf{w}$ at time $t_n$, and set
$\Delta \mathbf{w}_n \colonequals \mathbf{w}_n - \mathbf{w}_{n-1}$.

The variational inequality~\eqref{eq:variational_inequality} is discretized using a backward Euler scheme,
that is, derivatives at time $t_n$ are approximated by $\dot{\mathbf{w}} \approx \frac{\mathbf{w}_n - \mathbf{w}_{n-1}}{\Delta t}$.
(Other time discretization  methods can be handled similarly.)
Plugging this into~\eqref{eq:variational_inequality} yields
\begin{equation*}
a\Big(\mathbf{w}_n, \tilde{\mathbf{w}} - \frac{\Delta \mathbf{w}_n}{\Delta t} \Big) + j(\tilde{\mathbf{w}}) - j\Big(\frac{\Delta \mathbf{w}_n}{\Delta t}\Big)
\ge
\Big\langle l_n, \tilde{\mathbf{w}} - \frac{\Delta \mathbf{w}_n}{\Delta t} \Big\rangle \quad \forall \tilde{\mathbf{w}} \in W.
\end{equation*}
We use the positive homogeneity of $j$ to get rid of the $\Delta t$:
\begin{equation*}
a(\mathbf{w}_n, \tilde{\mathbf{w}} - \Delta \mathbf{w}_n) + j(\tilde{\mathbf{w}}) - j(\Delta \mathbf{w}_n)
\ge
\langle l_n, \tilde{\mathbf{w}} - \Delta \mathbf{w}_n \rangle
\quad
\forall \tilde{\mathbf{w}} \in W,
\end{equation*}
and rewrite this as a problem for the increment $\Delta \mathbf{w}_n$
\begin{multline}
\label{eq:increment_variational_inequality}
a(\Delta \mathbf{w}_n, \tilde{\mathbf{w}} - \Delta \mathbf{w}_n) + j(\tilde{\mathbf{w}}) - j(\Delta \mathbf{w}_n) \\
\ge
\langle l_n, \tilde{\mathbf{w}} - \Delta \mathbf{w}_n \rangle
- a(\mathbf{w}_{n-1}, \tilde{\mathbf{w}} - \Delta \mathbf{w}_n)
\quad
\forall \tilde{\mathbf{w}} \in W.
\end{multline}
Since $a(\cdot,\cdot)$ is symmetric, continuous, and elliptic on the set
$W_p \colonequals \{ w \in W \; : \; w \in \operatorname{dom} D\;\text{a.e.}\}$,
and $j(\cdot)$ is coercive, convex, and lower semicontinuous, we can
use a classic result from convex analysis~\cite[Chap.\,II, Prop.\,2.2]{ekeland_temam:1999} to obtain the following minimization formulation.
\begin{theorem}
 A function $\Delta \mathbf{w}_n \in W$ is a solution of the increment variational inequality~\eqref{eq:increment_variational_inequality}
 if and only if it minimizes the functional
 \begin{equation}
 \label{eq:continuous_energy_functional}
  \mathcal{L}(\mathbf{w})
  =
  \frac{1}{2}a( \mathbf{w},\mathbf{w}) + j(\mathbf{w}) - \langle l_n , \mathbf{w} \rangle + a(\mathbf{w}_{n-1},\mathbf{w}),
  \qquad
  \mathbf{w} = (\bs{u}, \bs{p}, \eta).
 \end{equation}
 Because of the ellipticity of $a(\cdot,\cdot)$,
 this functional is strictly convex, and coercive on $W$.
 Hence it has a unique minimizer.
\end{theorem}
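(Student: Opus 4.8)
The plan is to reduce everything to three standard properties of the functional~\eqref{eq:continuous_energy_functional} — the variational characterization of its minimizer, strict convexity, and coercivity — and then invoke the direct method. First I would expose the structure of $\mathcal{L}$ by collecting its non-quadratic linear contributions: the map $\bs{w} \mapsto \tfrac{1}{2}a(\bs{w},\bs{w}) + a(\bs{w}_{n-1},\bs{w}) - \langle l_n,\bs{w}\rangle$ is convex and Gateaux differentiable, with derivative in direction $\tilde{\bs{w}} - \bs{w}$ equal to $a(\bs{w},\tilde{\bs{w}}-\bs{w}) + a(\bs{w}_{n-1},\tilde{\bs{w}}-\bs{w}) - \langle l_n,\tilde{\bs{w}}-\bs{w}\rangle$, precisely because $a(\cdot,\cdot)$ is symmetric and continuous. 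The remaining term $j$ is convex, proper, and lower semicontinuous (this is Lemma~\ref{lem:dissipation_function_properties} applied pointwise and integrated over $\Omega$). For a sum of a differentiable convex functional and a convex functional, a point is a global minimizer if and only if it satisfies the associated variational inequality; substituting the derivative above at $\bs{w} = \Delta\bs{w}_n$ and rearranging reproduces exactly~\eqref{eq:increment_variational_inequality}. This equivalence is the cited result~\cite[Chap.\,II, Prop.\,2.2]{ekeland_temam:1999}.

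Next I would establish strict convexity. Writing $\mathcal{L} = Q + A + j$ with $Q(\bs{w}) \colonequals \tfrac{1}{2}a(\bs{w},\bs{w})$ and $A$ affine, it suffices that $Q$ be strictly convex, since $A$ and $j$ are already convex. By the polarization identity
\[
Q(\theta\bs{w}_0 + (1-\theta)\bs{w}_1)
= \theta Q(\bs{w}_0) + (1-\theta)Q(\bs{w}_1)
- \tfrac{1}{2}\theta(1-\theta)\,a(\bs{w}_0-\bs{w}_1,\bs{w}_0-\bs{w}_1),
\]
strict convexity of $Q$ is equivalent to $a$ being positive definite. \emph{This is the step I expect to be the main obstacle}, because Theorem~\ref{thm:ellipticity_of_the_bilinear_form} only asserts the ellipticity bound on $W_p$, whereas strict convexity forces me to evaluate $a$ at difference vectors $\bs{w}_0 - \bs{w}_1$ which need not lie in $W_p$ (the constraint $\norm{\bs{p}} \le \eta$ defining $\operatorname{dom} D$ is not preserved under subtraction). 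I would sidestep this gap by reading positive definiteness directly off the explicit form~\eqref{eq:bilinear_form} rather than from the uniform constant $c_0$: since $\bs{C}$ is positive definite on symmetric tensors and $k_1,k_2>0$, the quantity $a(\bs{v},\bs{v})$ is a sum of nonnegative terms that vanishes only when $\bs{p}=0$, $\eta=0$, and $\bs{\varepsilon}(\bs{u}) = \bs{p} = 0$, whence Korn's inequality on $V$ gives $\bs{u}=0$. Thus $a(\bs{v},\bs{v})>0$ for every $\bs{v}\neq 0$, and the strict drop follows.

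Finally I would treat coercivity and conclude. Because $j(\bs{w}) = +\infty$ whenever $\bs{w}\notin W_p$, every sublevel set of $\mathcal{L}$ lies in $W_p$, where Theorem~\ref{thm:ellipticity_of_the_bilinear_form} applies; combining $\tfrac{1}{2}a(\bs{w},\bs{w}) \ge \tfrac{c_0}{2}(\norm{\bs{u}}_V^2 + \norm{\bs{p}}_Q^2 + \norm{\eta}^2)$ with $j\ge 0$ (the dissipation function is the support function of a set containing the origin, hence nonnegative) and the at-most-linear growth of $A$ yields $\mathcal{L}(\bs{w}) \ge \tfrac{c_0}{2}\norm{\bs{w}}_W^2 - C\norm{\bs{w}}_W \to \infty$, so $\mathcal{L}$ is coercive. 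Since $\mathcal{L}$ is strictly convex, coercive, and weakly lower semicontinuous on the reflexive space $W$ (as a sum of a continuous convex quadratic, a continuous affine term, and the convex lsc term $j$), the direct method extracts a minimizer from a bounded minimizing sequence, and strict convexity makes it unique — which is again the conclusion delivered by~\cite[Chap.\,II, Prop.\,2.2]{ekeland_temam:1999}.
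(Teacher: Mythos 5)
Your proposal is correct and takes essentially the same route as the paper, which likewise obtains the equivalence with the variational inequality and the existence of a unique minimizer from the classical result \cite[Chap.\,II, Prop.\,2.2]{ekeland_temam:1999}, using symmetry, continuity, and ellipticity of $a(\cdot,\cdot)$ together with convexity and lower semicontinuity of $j$. The one point where you go beyond the paper's terse argument --- noting that Theorem~\ref{thm:ellipticity_of_the_bilinear_form} asserts ellipticity only on $W_p$, which does not directly apply to differences $\mathbf{w}_0 - \mathbf{w}_1$, and instead reading positive definiteness of $a$ directly off \eqref{eq:bilinear_form} via positivity of $\bs{C}$, $k_1, k_2 > 0$, and Korn's inequality --- correctly fills a detail the paper leaves implicit.
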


In the following, we omit the subscript $n$ for simplicity.

\subsection{Space discretization}
\label{sec:space_discretization}

For a finite element approximation of the functional \eqref{eq:continuous_energy_functional},
we need approximations for the displacement space $V$, the space $Q_0$ of plastic strains, and the space $M$
of the isotropic hardening variable $\eta$.  Since only the displacements are ever differentiated with respect to space, it is most natural to choose
\begin{align*}
 V^h &   : \quad \text{first-order Lagrangian elements with values in $\R^d$}, \\
 Q_0^h & : \quad \text{zero-order elements with values in $\mathbb{S}_0^d$}, \\
 M^h   & : \quad \text{zero-order elements with values in $\R$}.
\end{align*}
Convergence error estimates for this choice can be found, e.g., in~\cite[Chap.\,12.1]{han_reddy:2013}
and~\cite{alberty_carstensen_zarrabi:1999,carstensen:1999}.
We briefly discuss the use of first-order elements for all three spaces
in Chapter~\ref{sec:gradient_plasticity} below.

To state the algebraic formulation of the increment minimization problem,
let $\{\phi_i\}_{i=1}^{n_1}$ and $\{\theta_i\}_{i=1}^{n_2}$ be the nodal bases of the scalar first-order and zero-order
Lagrangian finite element spaces, respectively.  We define the matrix $A$ and the vector $b$ as the stiffness matrix and load
vector corresponding to the bilinear form $a(\cdot,\cdot)$ defined in~\eqref{eq:bilinear_form}, and the functional
$\langle l, \cdot \rangle - a(\mathbf{w}_{n-1},\cdot)$, of~\eqref{eq:increment_variational_inequality}, respectively.
The matrix $A$ is symmetric and, under the conditions of
Theorem~\ref{thm:ellipticity_of_the_bilinear_form}, positive definite.

For the displacements we introduce the canonical $d$-valued basis
\begin{equation*}
 \{ \bs{\phi}_{i,j} \}_{i=1,\dots,n_1, j=1,\dots,d},
 \qquad
 \bs{\phi}_{i,j} = \phi_i \be_j, \quad i=1,\dots,n_1, j=1,\dots,d,
\end{equation*}
where the $\mathbf{e}_k$ are the canonical basis vectors in $\R^d$.  Representing the plastic strain
matrices $\bp$ is more involved.
Any $\mathbf{p}$ is symmetric and trace-free, and has therefore only
$d_p \colonequals d (d+1) / 2 -1$ independent degrees of freedom.  Any implementation should exploit this, to keep memory-
and run-time requirements low.
To this purpose we introduce a basis $\{B_i\}_{i=1}^{d_p}$ of the space $\mathbb{S}^d_0$,
and we represent any symmetric trace-free $d \times d$ matrix $\bp$ as a $d_p$-vector by
\begin{equation*}
 p \mapsto B(p) \colonequals \sum_{j=1}^{d_p} B_j p_j.
\end{equation*}
Then a function $\mathbf{p}_h$ in $Q_0^h$ can be represented as
\begin{equation*}
 \mathbf{p}_h(x)
 =
 \sum_{i=1}^{n_2} \theta_i(x) \sum_{j=1}^{d_p}  B_j p_{i,j},
 \qquad
 \forall x \in \Omega.
\end{equation*}
It is helpful if the map $B : \R^{d_p} \to \mathbb{S}_0^d$ is an isometry.
We achieve this by selecting appropriate basis vectors $B_1,\dots,B_{d_p}$.
For two-dimensional problems we use
\begin{equation*}
 B_1 = \frac{1}{\sqrt{2}}
 \begin{pmatrix}
  1 &   \\
    & -1
 \end{pmatrix},\quad
 B_2 = \frac{1}{\sqrt{2}}
\begin{pmatrix}
    & 1 \\
  1 &
 \end{pmatrix},
\end{equation*}
whereas for three-dimensional problems we use
\begin{multline*}
 B_1 = \frac{1}{\sqrt{2}}
 \begin{pmatrix}
  -1 &   & \\
    & 1 & \\
    &   & 0
 \end{pmatrix},\quad
 B_2 = \frac{1}{\sqrt{2}}
\begin{pmatrix}
    & 1  & \\
  1 &    & \\
    &   &
 \end{pmatrix},\quad
 B_3 = \frac{1}{\sqrt{2}}
 \begin{pmatrix}
  \frac{1}{\sqrt{3}} &   & \\
    & \frac{1}{\sqrt{3}} & \\
    &   & -\frac{2}{\sqrt{3}}
 \end{pmatrix},\\
 B_4 = \frac{1}{\sqrt{2}}
 \begin{pmatrix}
    &   & 1 \\
    &   & \\
   1 &  &
 \end{pmatrix},\quad
 B_5 = \frac{1}{\sqrt{2}}
 \begin{pmatrix}
    &   & \\
    &   & 1\\
    &  1 &
 \end{pmatrix}.
\end{multline*}
The isotropic hardening variable $\eta_h$ does  not pose any difficulties, since it is scalar-valued anyway.

\bigskip

Let $u_{i,j}$, $i=1,\dots,n_1$, $j=1,\dots, d$ be the coefficients of the displacement vector with respect
to the basis given above, and likewise for the plastic strain coefficients $p_{i,j}$, $i=1,\dots,n_2$,
$j=1,\dots,d_p$, and $\eta_i$, $i=1,\dots,n_2$.  Anticipating the use of the TNNMG nonsmooth multigrid
method in the following chapter, we arrange the coefficients in a particular order.
We first list all displacement coefficients
\begin{equation}
\label{eq:displacement_ordering}
 u_{11}, \dots, u_{1d}, u_{21},\dots, u_{2d}, \dots, u_{n_11},\dots, u_{n_1d},
\end{equation}
followed by the plastic strain and isotropic hardening coefficients grouped for each element
\begin{equation}
\label{eq:plastic_strain_ordering}
 p_{11},\dots,p_{1d_p}, \eta_1, \;p_{21}, \dots, p_{2d_p}, \eta_2, \dots,
 p_{n_21},\dots,p_{n_2d_p}, \eta_{n_2}.
\end{equation}
Simply omit the $\eta_i$ if isotropic hardening is not considered.

With this ordering, the stiffness matrix $A$ consists of $2 \times 2$ blocks
\begin{equation*}
 A
 =
 \begin{pmatrix}
  E & C \\
  C^T & P
 \end{pmatrix}.
\end{equation*}
The matrix entries of the block $E$ are
\begin{equation}
\label{eq:elasticity_matrix_entries}
 (E_{ij})_{kl} = \int_\Omega \bC (\bs{\epsilon}(\phi_i \be_k) : \bs{\epsilon}(\phi_j \be_l)\,dx,
 \qquad
 1 \le i,j \le n_1, \quad 1 \le k,l \le d.
\end{equation}
Matrix $P$ is block-diagonal.  Each block on the diagonal has size $(d_p+1) \times (d_p+1)$,
and the form
\begin{equation}
\label{eq:plastic_strain_matrix_entries}
P_{ii}
=
 \begin{pmatrix}
  \tilde{P}_{ii} & 0 \\
  0              & \tilde{p}_{ii}
 \end{pmatrix},
\end{equation}
where
\begin{equation*}
 (\tilde{P}_{ii})_{kl}
 =
 \int_\Omega \Big[ \bC\theta_i(x)^2 B_k : B_l + k_1 \theta_i(x)^2 B_k : B_l \Big] \,dx,
 \qquad
 1 \le i \le n_2, \quad 1 \le k,l \le d_p,
\end{equation*}
and
\begin{align*}
 \tilde{p}_{ii} = \int_\Omega k_2 \theta_i(x)^2\,dx.
\end{align*}
The off-diagonal blocks $C$, $C^T$ of $A$ contain interactions between the plastic strain and the
displacement.
Entries of $C$ have the form
\begin{equation}
\label{eq:coupling_matrix_entries}
 C_{ij}
 =
 \begin{pmatrix}
  \tilde{C}_{ij} & 0
 \end{pmatrix}
 \in
 \R^{d \times (d_p + 1)}.
\end{equation}
The last column is zero,
because the displacements $\bu$ couple to the isotropic hardening variable $\eta$ only via the
dissipation functional.  The left block of such a $C_{ij}$ is
\begin{multline*}
 (\tilde{C}_{ij})_{k,l}
 =
 - \int_\Omega \bC \bs{\epsilon}(\phi_i \be_k) : \theta_j(x) B_l\,dx\\
 1 \le i \le n_1, \quad 1 \le j \le n_2, \quad 1 \le k \le d, \quad 1 \le l \le d_p.
\end{multline*}

Restrict the increment functional $\mathcal{L}$ from~\eqref{eq:continuous_energy_functional}
to the finite element spaces $V^h$, $Q_0^h$, and $M^h$, and use the bases just introduced
to represent discrete functions by their coefficient vectors.  This yields the
algebraic minimization problem
\begin{align}
 \nonumber
 L & \; : (\R^d)^{n_1} \times \big( \R^{d_p} \times \R \big)^{n_2} \to \R \cup \{ \infty \} \\
 \label{eq:algebraic_plasticity_functional_unlumped}
 L(w)
 & \colonequals
 \frac{1}{2} w^T A w - b^T w
 +
 \int_\Omega D\bigg(\sum_{i=1}^{n_2} \theta_i(x) \sum_{j=1}^{d_p} B_j p_{i,j}, \sum_{i=1}^{n_2} \theta_i(x) \eta_i \bigg)\,dx,
\end{align}
where $D$ is a dissipation function, for example the von Mises function~\eqref{eq:von_mises_dissipation_with_isotropic_hardening} or the
Tresca dissipation function~\eqref{eq:tresca_dissipation_with_isotropic_hardening}.  Since the $\theta_i$ are piecewise constant,
and $D$ is positive homogeneous,
the sum over the grid elements can be moved out of the dissipation function, to obtain
\begin{align}
 \label{eq:algebraic_plasticity_functional}
 L(w)
 & \colonequals
 \frac{1}{2} w^T A w - b^T w
 +
 \sum_{i=1}^{n_2} \int_\Omega \theta_i(x)\,dx \cdot D\bigg( \sum_{j=1}^{d_p} B_j p_{i,j}, \eta_i\bigg).
\end{align}
The case without isotropic hardening is obtained from this by simply dropping the relevant rows and columns from $A$ and $b$,
and by replacing $D$ by a dissipation function without dependence on $\eta$.

\section{The Truncated Nonsmooth Newton Multigrid method}
\label{sec:tnnmg_method}

The Truncated Nonsmooth Newton Multigrid (TNNMG) method is designed to solve nonsmooth convex minimization problems
on Euclidean spaces $\R^N$ for functionals with the block-separability structure~\eqref{eq:introduction_functional}.
Let $\R^N$ be endowed with a block structure
\begin{equation*}
 \R^N = \prod_{i=1}^m \R^{N_i},
\end{equation*}
and call $R_i : \R^N \to \R^{N_i}$ the canonical restriction to the $i$-th block.
Typically, the factor spaces $\R^{N_i}$ will have small dimension, but the number of factors $m$
is expected to be large.  We denote by $V_k$ the subspace of $\R^N$ of all vectors that have zero
entries everywhere outside of the $k$-th block.
Assume that the objective functional $J:\R^N \to \R \cup \{ \infty \}$ is given by
\begin{align}
    \label{eq:min_problem}
    J(v) = J_0(v) + \sum_{i=1}^m \varphi_i (R_i v),
\end{align}
with a convex $C^2$ functional $J_0 :\R^N \to \R$, and convex, proper, lower semi-continuous (l.s.c.) functionals
$\varphi_i : \R^{N_i} \to \R \cup \{ \infty \}$. We will additionally assume that $J$ is strictly convex and coercive,
but these conditions can be relaxed~\cite{graeser_sander:2017}.

Given such a functional $J$, the iteration number $\nu \in \mathbb{N}_0$, and a given previous iterate $v^\nu \in \R^N$,
one iteration of the TNNMG method consists of the following four steps:
\begin{enumerate}
    \item\label{enum:presmoothing}
        \textbf{Nonlinear presmoothing}
        \begin{enumerate}
            \item
                Set $\tilde{v}^0 = v^\nu$
            \item
                For $k=1,\dots,m$ compute $\tilde{v}^k \in \tilde{v}^{k-1} + V_k$ as
                        \begin{align}
                        \label{eq:general_local_problem}
                            \tilde{v}^k \approx \argmin_{\tilde{v} \in \tilde{v}^{k-1} + V_k} J(\tilde{v})
                        \end{align}
            \item
                Set $v^{\nu+\frac12} = \tilde{v}^m$
        \end{enumerate}
    \item \label{enum:linear_correction}
        \textbf{Inexact linear correction}
        \begin{enumerate}
            \item
                Determine maximal subspace $W_\nu \subset \R^N$ such that
                $J|_{W_\nu}$ is $C^2$ at $v^{\nu + \frac{1}{2}}$
            \item
                Compute $c^\nu \in W_\nu$ as an inexact Newton step on $W_\nu$
                \begin{align}
                \label{eq:inexact_linear_correction}
                    c^\nu \approx
                        -\big(J''(v^{\nu+\frac12})|_{W_\nu \times W_\nu}\big)^{-1}
                        \big(J'(v^{\nu+\frac12})|_{W_\nu} \big)
                \end{align}
        \end{enumerate}
    \item \label{enum:projection}
        \textbf{Projection}\\
                Compute the Euclidean projection $c_\text{pr}^\nu = P_{\operatorname{dom}J - v^{\nu+1/2}}(c^\nu)$,
                i.e., choose $c_\text{pr}^\nu$ such that $v^{\nu + \frac{1}{2}} + c_\text{pr}^\nu$ is closest
                to $v^{\nu + \frac{1}{2}} + c^\nu$ in $\operatorname{dom} J$
    \item  \label{enum:line_search}
        \textbf{Damped update}
        \begin{enumerate}
            \item
                Compute a $\rho_\nu \in [0,\infty)$ such that
                $J(v^{\nu+\frac12} + \rho_\nu c_\text{pr}^\nu) \leq J(v^{\nu+\frac12})$
            \item Set $v^{\nu+1} = v^{\nu+\frac12} + \rho_\nu c_\text{pr}^\nu$
        \end{enumerate}
\end{enumerate}

The canonical choice for the linear correction step~\eqref{eq:inexact_linear_correction}
is a single linear multigrid step, which explains
why the overall method is classified as a multigrid method. If a grid hierarchy is available, then
a geometric multigrid method is preferable. Otherwise, an algebraic multigrid step will work just
as well.

For this method we have the following general convergence result~\cite{graeser_sander:2017}:
\newcommand{\ContinuousArgmin}{\tilde{\mathcal{M}}}
\newcommand{\InexactArgmin}{\mathcal{M}}
\newcommand{\Ccal}{\mathcal{C}}

\begin{theorem}
    \label{thm:gs_stationary}
    Let $J : \R^N \to \R \cup \{\infty\}$ be strictly convex, coercive, proper, lower-semicontinuous,
    and block-separably nonsmooth.
    Write $\InexactArgmin_k : \tilde{v}^{k-1} \mapsto \tilde{v}^k$ for the (inexact) minimization
    of $J$ in $\tilde{v}^{k-1} + V_k$, possibly by an iterative algorithm.
    For a correction operator $\Ccal:\operatorname{dom}J \to \operatorname{dom}J$, and an initial guess
    $v^0 \in \operatorname{dom} J$ let $v^\nu$ be given by the TNNMG algorithm.
    Assume that there are operators $\ContinuousArgmin_k$ of the form
    \begin{equation*}
    \ContinuousArgmin_k : \operatorname{dom} J \to \operatorname{dom}J,
    \qquad
    \ContinuousArgmin_k - \operatorname{Id} : \operatorname{dom} \to V_k
    \end{equation*}
    such that the following holds true:
    \begin{enumerate}
        \item
            Monotonicity: $J(\InexactArgmin_k(v)) \leq J(\ContinuousArgmin_k(v)) \leq J(v)$
            and $J(\Ccal(v)) \leq J(v)$ for all $v \in \operatorname{dom}J$.
        \item
            Continuity: $J \circ \ContinuousArgmin_k$ is continuous.
        \item
            Stability: $J(\ContinuousArgmin_k(v)) < J(v)$ if $J(v)$ is not minimal in $v + V_k$.
    \end{enumerate}
    Then the sequence $v^\nu$ produced by the TNNMG iteration converges to the unique minimizer of $J$.
\end{theorem}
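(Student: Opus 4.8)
The plan is to run a Zangwill-type energy-descent argument: produce a monotone, bounded energy sequence, extract a cluster point of the iterates by coercivity, show the cluster point is the unique minimizer, and finally upgrade subsequential convergence to convergence of the whole sequence by strict convexity. First I would record that each outer step is energy-monotone: the presmoothing sweep $\tilde v^0 = v^\nu$, $\tilde v^k = \InexactArgmin_k(\tilde v^{k-1})$, $v^{\nu+\frac12} = \tilde v^m$ decreases $J$ block by block (by condition~(1)), and the correction, projection, and line search decrease it further, so that $J(v^{\nu+1}) \le J(v^{\nu+\frac12})$ using $J(\Ccal(v)) \le J(v)$ and the damping rule. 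Since $J$ has a minimizer it is bounded below, so $J(v^\nu) \searrow J^\ast$ and, telescoping, $J(v^\nu) - J(v^{\nu+1}) \to 0$; in particular the full presmoothing decrement $J(v^\nu) - J(v^{\nu+\frac12}) \to 0$. Coercivity and lower semicontinuity make the sublevel set $\{J \le J(v^0)\}$ compact, so the iterates admit a cluster point $v^\ast$ along some subsequence $v^{\nu_j} \to v^\ast$, and lower semicontinuity gives $J(v^\ast) \le J^\ast$.

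The second ingredient is a purely structural lemma: $v$ minimizes $J$ on $\R^N$ if and only if $J(v)$ is minimal on $v + V_k$ for every $k$. This follows from convexity together with the block-separable structure~\eqref{eq:min_problem}: since $J_0$ is smooth and $\sum_i \varphi_i(R_i \cdot)$ is separable, the optimality condition $0 \in \partial J(v)$ decouples into the $m$ conditions $0 \in R_k \nabla J_0(v) + \partial \varphi_k(R_k v)$, each of which is exactly the optimality condition for minimizing $J$ over $v + V_k$. Thus global optimality of $v^\ast$ reduces to block-wise optimality, and that is what the descent argument will verify.

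The heart of the proof is an induction over the blocks showing that $v^\ast$ is minimal on every $v^\ast + V_k$ and that the intermediate presmoothing iterates converge to $v^\ast$. Because the per-block differences $J(\tilde v^{k-1}) - J(\tilde v^k)$ are nonnegative and sum to the vanishing presmoothing decrement, each tends to zero along $(\nu_j)$; by the sandwich $J \circ \InexactArgmin_k \le J \circ \ContinuousArgmin_k \le J$ from condition~(1), so does $J(\tilde v^{k-1}) - J(\ContinuousArgmin_k(\tilde v^{k-1}))$. Assuming inductively that $\tilde v^{k-1} \to v^\ast$, the continuity of $J \circ \ContinuousArgmin_k$ (condition~(2)) passes this to $J(\ContinuousArgmin_k(v^\ast)) = J^\ast$; combined with $J(\ContinuousArgmin_k(v^\ast)) \le J(v^\ast) \le J^\ast$ this forces $J(v^\ast) = J^\ast$, and the contrapositive of the stability condition~(3) then yields that $J(v^\ast)$ is minimal on $v^\ast + V_k$. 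Strict convexity makes that slice-minimizer unique, so any cluster point of $\tilde v^k = \InexactArgmin_k(\tilde v^{k-1}) \in \tilde v^{k-1} + V_k$ has energy $\le J^\ast$, lies in $v^\ast + V_k$, and must therefore equal $v^\ast$; hence $\tilde v^k \to v^\ast$, closing the induction. After $m$ steps $v^\ast$ is block-minimal for all $k$, so by the structural lemma it is the minimizer; uniqueness then forces $J^\ast = \min J$ and every cluster point to coincide with the unique minimizer, whence the entire sequence converges.

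I expect the main obstacle to be exactly this propagation of the vanishing decrement through the sequential sweep, for two reasons: the iteration uses the inexact operators $\InexactArgmin_k$ rather than the idealized $\ContinuousArgmin_k$, and $J$ is only lower semicontinuous, so energies need not pass to limits along $v^{\nu_j}$. The energy sandwich handles the first difficulty and the continuity hypothesis~(2) the second, while strict convexity is what upgrades block-wise energy equality to convergence of the intermediate iterates and lets the induction proceed past the first block. The correction operator $\Ccal$, the projection, and the line search are, by contrast, inessential for convergence: they enter only through monotonicity, guaranteeing $J(v^{\nu+1}) \le J(v^{\nu+\frac12})$, and serve to accelerate the method rather than to make it convergent.
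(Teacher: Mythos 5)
Your proposal is correct, and it is essentially the proof the paper relies on: the paper itself defers to \cite{graeser_sander:2017}, whose argument is exactly this Zangwill-type scheme --- monotone energies with compact sublevel sets, vanishing sweep decrements propagated through the blocks by induction using the sandwich $J\circ\InexactArgmin_k \le J\circ\ContinuousArgmin_k \le J$ together with continuity and stability of $\ContinuousArgmin_k$, the observation that block-separability makes blockwise optimality equivalent to global optimality, and strict convexity to upgrade cluster-point arguments to convergence of the whole sequence. Your closing remarks (the correction $\Ccal$, projection, and line search enter only through monotonicity; strict convexity is what lets the induction pass from one block to the next) also match the paper's own discussion of why presmoother convergence alone drives the result.
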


Note that Theorem~\eqref{thm:gs_stationary} poses only weak conditions
on the local solvers~\eqref{eq:general_local_problem}. Roughly speaking, there needs to be a continuous
local inexact minimization operator $\ContinuousArgmin_k$ that bounds the energy decrease of the actual
minimization operator $\InexactArgmin_k$. In the important subcase that $\InexactArgmin_k$ is
exact minimization, the required properties are shown in~\cite{graeser_sander:2017}.
This allows to use rapid inexact local solvers for~\eqref{eq:general_local_problem},
when solving those local problems exactly is too expensive.  The linear correction operator $\Ccal$
(Steps~\ref{enum:linear_correction}--\ref{enum:line_search}) only needs to produce non-increasing energy
to obtain global convergence; a property that actually holds by construction.
See~\cite{graeser_sander:2017} for details.

In practice it is observed that the method degenerates to a multigrid method
after a finite number of steps, and hence multigrid convergence rates are achieved
asymptotically.

\subsection{Application to elastoplasticity problems}
\label{sec:elastoplastic_tnnmg}

Obviously, the plasticity increment functional $L$ defined in~\eqref{eq:algebraic_plasticity_functional} fits all
requirements made by the TNNMG method.  It does have a natural block structure.
Indeed, with the ordering of the coefficients given in~\eqref{eq:displacement_ordering}
and~\eqref{eq:plastic_strain_ordering}, the algebraic ansatz space $\R^N$ can be grouped in $n_1$ blocks
of size $d$ for the displacement degrees of freedom, followed by $n_2$ blocks of size $d_p +1$ for the
plastic strain and isotropic hardening values ($d_p$ if isotropic hardening is not considered).
To simplify the notation we introduce $V_i^1$, $i=1,\dots,n_1$, the $d$-dimensional space spanned
by the degrees of freedom in the $i$-th block of displacement variables, and
$V_i^2$, $i=1,\dots,n_2$, the $(d_p+1)$-dimensional space spanned
by the degrees of freedom in the $i$-th block of plastic strain and hardening variables.

In the following, $w \in \R^N$ will denote a triple $(u,p,\eta)$ of coefficients for displacements,
plastic strains, and isotropic hardening values, arranged in the ordering
\eqref{eq:displacement_ordering}--\eqref{eq:plastic_strain_ordering}.
The elastoplastic increment functional $L$ is indeed a sum as required by~\eqref{eq:min_problem}.
The first addend is
\begin{equation*}
 L_0(w)
 \colonequals
 \frac{1}{2} w^T A w - b^T w,
\end{equation*}
where $A$ and $b$ are the stiffness matrix and load vector of Chapter~\ref{sec:space_discretization}.
This functional is $C^2$ and convex.  Moreover, it is even
strictly convex and coercive if the conditions of Theorem~\ref{thm:ellipticity_of_the_bilinear_form} hold.
The remainder of $L$ is a sum over nonlinear functionals $\varphi_i$, where each $\varphi_i$ depends only
on variables from the $i$-th block.  For the $n_1$ blocks of size $d$ pertaining to the displacements,
this functional is simply zero.  For the remaining $n_2$ blocks of size $d_p+1$, it is given by
\begin{equation}
\label{eq:algebraic_dissipation_functional}
 \varphi_i(p_i,\eta_i)
 \colonequals
 \int_\Omega \theta_i(x)\,dx \cdot D\bigg( \sum_{j=1}^{d_p} B_j p_{i,j}, \eta_i\bigg),
\end{equation}
where $D$ is the dissipation function of the elastoplastic model.
By Lemma~\ref{lem:dissipation_function_properties}, any dissipation function is convex, proper, and
l.s.c.  Therefore, the $\varphi_i$ are convex, proper, and l.s.c.\ because they are the compositions
of the linear map $(p_i,\eta_i) \mapsto \big( \sum_{j=1}^{d_p} B_j p_{i,j}, \eta_i\big)$ with
the convex, proper, l.s.c.\ function $D$.  However, the $\varphi_i$ will not be differentiable,
no matter what the flow rule is.  If isotropic hardening is considered, they may even assume the
value $\infty$.

\bigskip

We now discuss the four substeps of a TNNMG iteration in the light of the linear primal
plasticity increment problem.
Step~\ref{enum:presmoothing} is one iteration of an inexact nonlinear block Gauß--Seidel method, written as a
sequence of local minimization problems~\eqref{eq:general_local_problem}. These problems
can be algorithmically challenging, but by construction, the search spaces are low-dimensional, and therefore
even minimization methods that scale badly with the problem size can be used here.
As an additional important feature, these local minimization problems typically need not be solved exactly,
which can save a lot of run-time.  We discuss several approaches for the primal elastoplastic
in Chapter~\ref{sec:smoothers}.

Step~\ref{enum:linear_correction} is an inexact Newton step on a subspace $W_\nu$ of $\R^N$,
where $W_\nu$ is constructed such that all necessary derivatives exist.
For elastoplasticity problems, the easiest approach is to take the full space $\R^N$,
and omit those plastic strain/hardening subspaces where $\varphi_i$ is not differentiable.
More technically, for a given presmoothed iterate $w^{\nu + \frac{1}{2}} = (u^{\nu + \frac{1}{2}},
p^{\nu + \frac{1}{2}}, \eta^{\nu + \frac{1}{2}})$, define the set of ``inactive'' coefficients
\begin{equation}
\label{eq:inactive_set}
 \mathcal{N}_\nu^\circ
 \colonequals
 \big\{ i = 1,\dots,n_2 \; : \; \text{$\nabla^2 \varphi_i (p_i^{\nu + \frac{1}{2}}, \eta_i^{\nu + \frac{1}{2}})$ exists and is continuous} \big\}.
\end{equation}
Then define
\begin{equation}
\label{eq:def_truncated_space}
 W_\nu
 \colonequals
 (\R^d)^{n_1}
 \oplus
 \bigoplus_{i=1}^{n_2}
 \begin{cases}
  \R^{d_p+1}  & \text{if $i \in \mathcal{N}_\nu^\circ$}, \\
  \{ 0\}^{d_p+1} & \text{otherwise}.
 \end{cases}
\end{equation}
This is the largest possible subspace in the case of the von Mises yield function
with only kinematic hardening, because then each $\varphi_i$ is $C^2$ everywhere except
for a single point.
If the Tresca dissipation function is used, then $\varphi_i$ is not differentiable
if the plastic strain has double eigenvalues,
but it is differentiable in directions that preserve the multiplicity of the eigenvalues.
In that case, a larger space than \eqref{eq:def_truncated_space} can be constructed.
However, as double eigenvalues are rare,
this discrepancy typically does not cause a noticeable difference
of the convergence speed.

The Newton correction system~\eqref{eq:inexact_linear_correction} is then
solved, but only very inexactly: the standard TNNMG method will only do one geometric or algebraic
multigrid iteration here. This is a core feature: Unlike basically all competing algorithms,
the TNNMG method does not solve a linear system in each iteration. Nevertheless it
achieves convergence rates that are competitive with predictor--corrector methods.

The projection Step~\ref{enum:projection} onto the effective domain of the functional is not strictly
necessary to guarantee convergence.
However, it is observed in practice that it extends the effectivity of the subsequent line search.
For primal plasticity with only kinematic hardening, the effective domain is the entire space, and the projection
step is void.  Otherwise, we use a cheap Euclidean projection.
The line search in Step~\ref{enum:line_search} is a one-dimensional strictly convex minimization
problem.  We solve it by using bisection to find a zero of the directional subdifferential.
Again there is no need to solve this minimization problem exactly.

\bigskip

The following global convergence result is a direct consequence of Theorem~\ref{thm:gs_stationary}.
\begin{corollary}
Suppose that the local problems~\eqref{eq:general_local_problem} are solved exactly.
Then the TNNMG iteration for the increment functional $L$ converges to the unique
minimizer of $L$ for any initial iterate $w^0 = (u^0, p^0, \eta^0) \in \R^N$.
\end{corollary}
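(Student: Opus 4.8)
The plan is to verify that the increment functional $L$ from~\eqref{eq:algebraic_plasticity_functional} satisfies every hypothesis of Theorem~\ref{thm:gs_stationary} and then to read off the conclusion. Most of the structural verification has in fact already been carried out in Section~\ref{sec:elastoplastic_tnnmg}: $L$ has the block-separable form~\eqref{eq:min_problem} with the $C^2$ convex part $L_0(w)=\tfrac12 w^TAw-b^Tw$ and the nonsmooth blocks $\varphi_i$ from~\eqref{eq:algebraic_dissipation_functional}, which are convex, proper, and lower semicontinuous by Lemma~\ref{lem:dissipation_function_properties} together with the fact that they are compositions of $D$ with a linear map. So the remaining points to check are that $L$ as a whole is strictly convex, coercive, proper, and lower semicontinuous.

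First I would note properness: the effective domain of $L$ is nonempty because $w=0$ lies in it, since $D(0,0)=0$ gives $\varphi_i(0,0)=0$ and hence $L(0)=0$. Lower semicontinuity is inherited from the continuity of $L_0$ and the lower semicontinuity of the finitely many $\varphi_i$. For strict convexity and coercivity I would invoke that, under the conditions of Theorem~\ref{thm:ellipticity_of_the_bilinear_form} (that is, $k_1,k_2>0$), the matrix $A$ is symmetric positive definite, as recorded in Section~\ref{sec:space_discretization}; thus $L_0$ is strictly convex and coercive, and adding the convex, nonnegative functionals $\varphi_i$ preserves both properties. This is precisely the algebraic counterpart of the strict convexity and coercivity already established for $\mathcal{L}$ after the time discretization.

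Next, because the hypothesis of the corollary is that the local problems~\eqref{eq:general_local_problem} are solved exactly, I would take both the inexact and the bounding operators to be the exact block minimizer, $\InexactArgmin_k = \ContinuousArgmin_k : v \mapsto \argmin_{\tilde v\in v+V_k} J(\tilde v)$. This map is well defined and single-valued, since restricted to the affine subspace $v+V_k$ the functional is still strictly convex, coercive, and lower semicontinuous, and hence attains a unique minimum. With this choice the three conditions of Theorem~\ref{thm:gs_stationary} reduce to their easy cases. Monotonicity $J(\InexactArgmin_k(v))\le J(\ContinuousArgmin_k(v))\le J(v)$ holds with equality in the first inequality and trivially in the second, because a minimizer over $v+V_k$ can never increase the energy; and stability $J(\ContinuousArgmin_k(v))<J(v)$ whenever $v$ is not already minimal in $v+V_k$ is immediate from exactness. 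The correction operator $\Ccal$ (Steps~\ref{enum:linear_correction}--\ref{enum:line_search}) produces non-increasing energy by construction, since the damping step explicitly enforces $J(v^{\nu+1})\le J(v^{\nu+1/2})$.

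The one substantive point, and the place I expect the only real work to sit, is the continuity requirement that $J\circ\ContinuousArgmin_k$ be continuous. For exact minimization of a strictly convex, coercive, lower semicontinuous functional this is exactly the property established in~\cite{graeser_sander:2017}; alternatively it follows from a Berge-type maximum-theorem argument, using strict convexity for single-valuedness and coercivity to confine the minimizing point to a bounded set as $v$ varies. Invoking that continuity, all hypotheses of Theorem~\ref{thm:gs_stationary} are met, and the theorem yields convergence of the TNNMG iterates $w^\nu$ to the unique minimizer of $L$ from any initial guess $w^0\in\R^N$, which is the assertion of the corollary.
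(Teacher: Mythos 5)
Your proposal is correct and follows the same route as the paper, which states the corollary as a direct consequence of Theorem~\ref{thm:gs_stationary} after the structural verification of Section~\ref{sec:elastoplastic_tnnmg} ($L_0$ convex, $C^2$, strictly convex and coercive under Theorem~\ref{thm:ellipticity_of_the_bilinear_form}; the $\varphi_i$ convex, proper, l.s.c.\ via Lemma~\ref{lem:dissipation_function_properties}), and which likewise delegates the monotonicity, continuity, and stability of the exact local minimization operators to \cite{graeser_sander:2017}. You merely spell out details the paper leaves implicit, such as properness, the choice $\InexactArgmin_k=\ContinuousArgmin_k$, and the energy monotonicity of the correction step, all of which are consistent with the paper's argument.
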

As Theorem~\ref{thm:gs_stationary} makes actually very weak assumptions on how the local
problems are solved, global convergence also follows for a number of inexact local solvers
(see~\cite{graeser_sander:2017}) for a few examples).
The asymptotic multigrid behavior is readily observed in numerical experiments,
see Chapter~\ref{sec:numerical_experiments}. The possibility to also use inexact solvers
for~\eqref{eq:general_local_problem} is hinted at in the following section.

\subsection{Smoothers for smooth and nonsmooth yield laws}
\label{sec:smoothers}

Step~\ref{enum:presmoothing} of the TNNMG method is a nonlinear block Gauß--Seidel smoother.
It consists of a sequence of local inexact minimization problems
\begin{align*}
  \tilde{v}^k \approx \argmin L(\tilde{v}),
  \qquad
  k = 1,\dots, n_1 + n_2
\end{align*}
in the affine search spaces $\tilde{v}^{k-1} + V_k$.

In the context of primal elastoplasticity, the sequence of local minimization problems consists
of two parts:  First is a block Gauß--Seidel loop over the displacement degrees of freedom
\begin{align*}
  \tilde{w}^k \approx \argmin_{\tilde{w} \in \tilde{w}^{k-1} + V_k^1} L(\tilde{w}),
  \qquad
  k = 1,\dots, n_1
\end{align*}
in the $d$-dimensional search spaces $\tilde{w}^{k-1} + V_k^1$,
followed by a block Gauß--Seidel loop over the $(d_p+1)$-dimensional blocks containing
the plastic strain and isotropic hardening for a single element
\begin{align*}
  \tilde{w}^k \approx \argmin_{\tilde{w} \in \tilde{w}^{k-1} + V_k^2} L(\tilde{w}),
  \qquad
  k = 1,\dots, n_2.
\end{align*}

Let first $k$ be a displacement block.  Then minimizing $L$ in the $d$-dimensional space
$V_k^1$ is equivalent to minimizing
\begin{equation*}
 L_k : \R^d \to \R,
 \qquad
 L_k(u_k) \colonequals \frac{1}{2} u_k^T E_{kk} u_k - u_k^T r_k + \text{const}.
\end{equation*}
The matrix $E_{kk}$ is of the form given in~\eqref{eq:elasticity_matrix_entries},
and $r_k \in \R^d$ is the residual
\begin{equation*}
 r_k
 =
 b_k
 -
 \sum_{\substack{i=1\\ i\neq k}}^{n_1} (E_{ki})u_i
 -
 \sum_{i=1}^{n_2} (C_{ki})\underbrace{\begin{pmatrix}p_i \\ \eta_i \end{pmatrix}}_{\in \R^{d_p} \times \R}.
\end{equation*}
This is a quadratic minimization problem in $d$ variables with a symmetric positive definite matrix.
For efficient overall convergence rates it is sufficient to solve it inexactly using one sweep
of a scalar Gauß--Seidel method, but one may as well solve it directly, using, e.g.,
Cholesky decomposition.

If $k$ is one of the plastic strain/isotropic hardening blocks, then the local functional has the form
\begin{align}
 \nonumber
 L_k & \; : \R^{d_p + 1} \to \R \cup \{ \infty \}, \\
\label{eq:local_plastic_strain_problem}
 L_k\begin{pmatrix} p_k \\ \eta_k \end{pmatrix}
 & =
 \frac{1}{2} \begin{pmatrix} p_k \\ \eta_k\end{pmatrix}^T P_{kk} \begin{pmatrix}p_k \\ \eta_k \end{pmatrix}
  - \begin{pmatrix}p_k \\ \eta_k\end{pmatrix}^T r_k
   + \lambda_k D\Big( \sum_{j=1}^{d_p} B_j p_{k,j}, \eta_k\Big),
\end{align}
with the constant $\lambda_k \colonequals \int_\Omega \theta_k\,dx$ depending only on the grid.
The matrix $P_{kk}$ is given by~\eqref{eq:plastic_strain_matrix_entries}, and the residual $r_k$ has the form
\begin{equation*}
 r_k
 =
 b_k - \sum_{i=1}^{n_1}(C^T)_{ki} u_i.
\end{equation*}
Under the assumptions of Theorem~\ref{thm:ellipticity_of_the_bilinear_form}, $L_k$ as defined
in~\eqref{eq:local_plastic_strain_problem} is a strictly convex,
coercive minimization functional.

Depending on the actual dissipation function $D$, minimizing this functional can nevertheless be a challenge.
However, each local problem depends on only $d_p+1$ variables (or $d_p$
if there is no isotropic hardening), a quantity that is small, and independent of the mesh size.
This allows to employ algorithms from convex optimization even if they scale badly with the
problem size.  Also, convergence of the TNNMG method does not require to solve
the local problems exactly.  We discuss a few examples in turn.

\subsubsection{Example: The von Mises dissipation function}

If $D$ is the von Mises dissipation function, and isotropic hardening is omitted, then the local functional
$L_k$ has the form
\begin{equation}
\label{eq:von_mises_local_functional}
 L_k : \R^{d_p} \to \R,
 \qquad
 L_k(p)
 \colonequals
 \frac{1}{2} p^T P_{kk} p - p^T r_k + \lambda_k \sigma_c \norm{B(p)}.
\end{equation}
It is well known that the minimizer of this can be computed in closed form.
The following result is proved in~\cite[Prop.\,7.1]{alberty_carstensen_zarrabi:1999}.
The matrix $\bC$ is the Hooke tensor of the St.\,Venant--Kirchhoff material~\eqref{eq:stvenant_kirchhoff},
$\mu$ is the second Lamé parameter, and $k_1$ is the kinematic hardening modulus.
\begin{lemma}
Given $R \in \mathbb{S}^d$ and $\sigma_c > 0$.  Then
\begin{equation*}
P^*
\colonequals
\frac{\max\{(\norm{R^D} - \sigma_c ),0\}}{2\mu + k_1}
\frac{R^D}{\norm{R^D}}
\end{equation*}
is the unique minimizer of
\begin{equation*}
 \mathcal{L}(P)
 \colonequals
 \frac{1}{2} (\bC + k_1)P : P - P : R + \sigma_c \norm{P}
\end{equation*}
among the trace-free symmetric $d \times d$-matrices.
\end{lemma}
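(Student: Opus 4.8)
The plan is to exploit that the minimization runs only over trace-free symmetric matrices, which collapses the quadratic part to a multiple of $\norm{P}^2$ and reduces the whole problem to a one-dimensional computation. First I would simplify $\mathcal{L}$ on $\mathbb{S}_0^d$: since $\operatorname{tr} P = 0$, the isotropic Hooke law~\eqref{eq:stvenant_kirchhoff} gives $\bC P = 2\mu P$, so that $(\bC + k_1)P : P = (2\mu + k_1)\norm{P}^2$. Likewise $P : R = P : R^D$, because the spherical part $\tfrac{1}{d}(\operatorname{tr} R)\mathbf{I}$ is orthogonal to every trace-free matrix under the Frobenius inner product. Hence on $\mathbb{S}_0^d$ the objective reads $\mathcal{L}(P) = \tfrac{1}{2}(2\mu + k_1)\norm{P}^2 - P : R^D + \sigma_c \norm{P}$, which is strictly convex and coercive and therefore possesses a unique minimizer.

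Second, I would characterize that minimizer through the optimality condition $0 \in \partial \mathcal{L}(P^*)$, with the subdifferential taken inside the Hilbert space $\mathbb{S}_0^d$. Setting $c \colonequals 2\mu + k_1 > 0$ and applying the sum rule (the quadratic and linear terms are smooth, so only the norm term is nonsmooth), this becomes $0 \in c P^* - R^D + \sigma_c\,\partial\norm{P^*}$, where $\partial \norm{P} = \{P/\norm{P}\}$ for $P \neq 0$ and $\partial \norm{0}$ is the closed unit ball $\{X \in \mathbb{S}_0^d : \norm{X} \le 1\}$.

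Third, I would treat the two regimes encoded in the $\max\{\cdot,0\}$ of the formula. If $\norm{R^D} \le \sigma_c$, then $R^D/\sigma_c$ lies in the closed unit ball, so $0 \in -R^D + \sigma_c\,\partial\norm{0}$, confirming $P^* = 0$, which matches the stated formula. If $\norm{R^D} > \sigma_c$, then $P^* \neq 0$ must satisfy $cP^* + \sigma_c P^*/\norm{P^*} = R^D$, which forces $P^*$ to be a positive multiple of $R^D$; writing $P^* = t\,R^D/\norm{R^D}$ with $t > 0$ and substituting gives the scalar identity $ct + \sigma_c = \norm{R^D}$, hence $t = (\norm{R^D} - \sigma_c)/(2\mu + k_1)$, exactly the claimed magnitude, with $t > 0$ confirming consistency of the direction.

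The step requiring the most care — rather than genuine difficulty — is the handling of $\partial\norm{\cdot}$ at the origin together with the reduction $P : R = P : R^D$; both hinge on $\mathbb{S}_0^d$ being the orthogonal complement of the spherical tensors. Once these are settled, the argument reduces to the one-dimensional equation above. A tidy alternative that avoids the case split is to verify directly that the proposed $P^*$ satisfies $0 \in \partial\mathcal{L}(P^*)$ in both regimes; by strict convexity this immediately certifies $P^*$ as the unique global minimizer.
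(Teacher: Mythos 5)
Your proof is correct, and there is nothing in the paper to compare it against in detail: the paper does not prove this lemma at all, but simply cites Proposition~7.1 of Alberty, Carstensen, and Zarrabi (1999). Your argument would serve as a valid self-contained replacement. The two reductions you flag as the delicate points are exactly right and both check out: for $P \in \mathbb{S}_0^d$ the isotropic law~\eqref{eq:stvenant_kirchhoff} gives $\bC P = \lambda\operatorname{tr}(P)\mathbf{I} + 2\mu P = 2\mu P$, so the quadratic form collapses to $(2\mu+k_1)\norm{P}^2$ with $2\mu + k_1 > 0$; and since $R^D$ is itself symmetric and trace-free, the linear term's effective gradient $-R^D$ lies in the subspace $\mathbb{S}_0^d$, so working with the subdifferential intrinsically in that Hilbert subspace (unit ball of $\mathbb{S}_0^d$ at the origin, $\{P/\norm{P}\}$ otherwise) is legitimate. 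Your case split is complete: for $\norm{R^D} \le \sigma_c$ the inclusion $0 \in -R^D + \sigma_c\,\partial\norm{0}$ certifies $P^* = 0$, and for $\norm{R^D} > \sigma_c$ the stationarity equation $(c + \sigma_c/\norm{P^*})P^* = R^D$ indeed forces $P^*$ to be a positive multiple of $R^D$ with magnitude $t = (\norm{R^D}-\sigma_c)/(2\mu+k_1) > 0$; strict convexity and coercivity then give existence and uniqueness, so the critical point is the unique global minimizer. The only cosmetic point worth a remark is the degenerate case $R^D = 0$, where the displayed formula for $P^*$ reads $0/0$ and must be interpreted as $P^* = 0$ --- but this ambiguity is inherited from the statement itself (and from the cited reference), not introduced by your proof, and your first case handles it correctly.
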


Alternatively, inexact solvers can be used, for example if the elastic law is not
the St.\,Venant--Kirchhoff one.  For this, note that $L_k$ as defined in~\eqref{eq:von_mises_local_functional}
is continuously differentiable for all $p \neq 0$, with gradient given by
\begin{align*}
 (\nabla L_k(p))_l
 & =
 P_{kk}p - r_k
 + \lambda_k \sigma_c \sum_{i,j=1}^d \frac{B(p)_{ij}}{\norm{B(p)}} \cdot \frac{\partial B(p)_{ij}}{\partial p_l},
 \qquad
 l = 1,\dots,d_p.
\end{align*}
At $p=0$, the functional is not differentiable, but the forward directional derivative exists.
For a given direction $v \in \R^{d_p}$, we have
\begin{align*}
\frac{dL_k(0)}{dv}
 \colonequals
 \lim_{t \searrow 0} \frac{1}{t} \big[ L_k (0 + tv) - L_k(0)\big]
 =
 - v^T r_k + \lambda_k \sigma_c \lim_{t \searrow 0} \frac{1}{t} \norm{B(tv)}.
\end{align*}
The last term is equal to $\lambda_k \sigma_c \norm{B(v)} = \lambda_k \sigma_c\norm{v}$, because
$p \mapsto B(p)$ is an isometry by construction.
With this special symmetry, the steepest descent direction at $p=0$ can easily be computed
\begin{equation*}
 d_\text{min}
 =
 \argmin_{\norm{v}=1} \frac{dL_k(0)}{dv}
 =
 \argmin_{\norm{v}=1} \Big[- v^T r_k + \lambda_k \sigma_c \norm{v} \Big]
 =
 \frac{r_k}{\norm{r_k}}.
\end{equation*}

If there is isotropic hardening, the local functional takes the slightly more general form
\begin{equation*}
 L_k : \R^{d_p} \times \R \to \R \cup \{ \infty \}
 \qquad
 L_k\begin{pmatrix}p \\ \eta \end{pmatrix}
 \colonequals
 \frac{1}{2} p^T P_{kk} p - \begin{pmatrix} p\\ \eta\end{pmatrix}^T r_k + \frac{1}{2} k_2 \eta^2 + \lambda_k \sigma_c \norm{B(p)},
\end{equation*}
but is now to be minimized under the constraint
\begin{equation*}
 \sigma_c \norm{B(p)} = \sigma_c \norm{p} \le \eta.
\end{equation*}
One possible approach for this is an inexact projected steepest descent method, generalizing the
formulas for the steepest descent direction of the functional without isotropic hardening.
The admissible set is a circular cone, onto which a projection can be computed easily.

\subsubsection{Example: The Tresca dissipation function}

If the Tresca dissipation functional is used, and isotropic hardening is omitted, then the local functional
has the form
\begin{equation*}
 L_k(p)
 \colonequals
 \frac{1}{2} p^T P_{kk} p - p^T r_k + \lambda_k \sigma_c \rho(B(p)),
\end{equation*}
where $\rho(B(p))$ is the spectral radius of the matrix $B(p)$.
This functional is nonsmooth whenever the spectral radius is realized
by more than one eigenvalue.  It is not clear how to compute a steepest-descent direction for this
functional; however, the computation of subdifferentials is relatively straightforward.

As in the von Mises case, isotropic materials are particularly simple.
\citet{rencontre_bird_martin:1992} show how the principal
directions of the minimizer of $L_k$ can be computed a priori. This reduces the minimization
problem to a problem in $\R^d$, where the nonsmoothness is the infinity norm
$\abs{x}_\infty :  x \mapsto \max_{1,\dots,d} \abs{x_i}$.
To compute the subdifferential of $\abs{\cdot}_\infty : x \mapsto \max_{i=1,\dots,d} \abs{x_i}$, we rewrite it as
\begin{equation*}
 \abs{x}_\infty = \max_{i=1,\dots,d} \{x_i, -x_i \},
\end{equation*}
and note that this is a subsmooth function in the sense of \cite[Chap.\,10]{rockafellar_wets:2010}.
Therefore, we can use Theorem~10.31 of \cite{rockafellar_wets:2010} to obtain that
\begin{align*}
 \partial \abs{x}_\infty
 =
 \operatorname{conv} \big\{ & \phantom{-} \text{$e_i$ for all $i$ such that $\phantom{-}x_i = \abs{x}_\infty$}, \\
         & \text{$-e_i$ for all $i$ such that $-x_i = \abs{x}_\infty$} \big \},
\end{align*}
where $e_i = \nabla x_i$ is the $i$-th canonical basis vector of $\R^d$.

For the general case, we observe that the spectral radius function $\rho : \mathbb{S}^d \to \R$
can be written as a composition
\begin{equation*}
 \rho(X) = (\abs{\cdot}_\infty \circ \lambda)(X),
\end{equation*}
where $\lambda$ maps the symmetric matrix $X$ to its eigenvalues (in some order), and $\abs{\cdot}_\infty$
is symmetric, i.e., invariant under permutations of its arguments~\cite{drusvyatskiy_paquette:2015,lewis:1996}.
Using a general result from matrix analysis, we can reduce the computation of the subdifferential
of $\rho$ to the computation of the subdifferential of $\abs{\cdot}_\infty$.

\begin{theorem}[\cite{drusvyatskiy_paquette:2015,lewis:1996}]
 Suppose that the function $f : \R^d \to \R \cup \{ \infty \}$ is convex and symmetric.
 Then $f \circ \lambda : \mathbb{S}^d \to \R$ is convex, and
\begin{equation*}
 \partial(f \circ \lambda)(X)
 =
 \Big \{ U (\operatorname{diag} v) U^T
  \; : \;
  v \in \partial f(\lambda(X)), \;U \in O_X \Big\},
\end{equation*}
where $O_X$ is the set of all orthogonal matrices $U$ for which
\begin{equation*}
 X = U (\operatorname{diag}\lambda(X)) U^T.
\end{equation*}
\end{theorem}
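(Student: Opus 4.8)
The plan is to follow the classical route of Lewis via convex duality, whose technical engine is the von Neumann (Ky Fan) trace inequality. First I would record, for $X,Y \in \mathbb{S}^d$ with eigenvalues ordered decreasingly,
\[
 \langle X, Y\rangle = \operatorname{tr}(XY) \le \lambda(X)^T \lambda(Y),
\]
together with its equality case: equality holds precisely when $X$ and $Y$ are \emph{simultaneously diagonalizable with their eigenvalues in the same (decreasing) order}, i.e., $X = U(\operatorname{diag}\lambda(X))U^T$ and $Y = U(\operatorname{diag}\lambda(Y))U^T$ for a common orthogonal $U$. This inequality and its equality characterization I would take as known from matrix analysis; it is the one genuinely nontrivial external ingredient.

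The second step computes the Fenchel conjugate of the spectral function. Using the trace inequality and the permutation invariance of $f$, I would show
\[
 (f\circ\lambda)^*(Y) = \sup_{X \in \mathbb{S}^d}\big\{\langle X,Y\rangle - f(\lambda(X))\big\} = \sup_{x \in \R^d}\big\{ x^T\lambda(Y) - f(x)\big\} = (f^*\circ\lambda)(Y).
\]
Here the upper bound comes from $\langle X,Y\rangle \le \lambda(X)^T\lambda(Y)$, while the matching lower bound is obtained by restricting the supremum to matrices $X$ simultaneously diagonalized with $Y$; symmetry of $f$ lets one freely reorder eigenvalue vectors, so the scalar supremum really is $f^*$ evaluated at $\lambda(Y)$. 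Applying this identity twice and using that $f$ is convex (proper and l.s.c.\ may be assumed, so $f^{**}=f$) gives $(f\circ\lambda)^{**} = f^{**}\circ\lambda = f\circ\lambda$, which proves that $f\circ\lambda$ is convex.

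For the subdifferential I would invoke the Fenchel--Young equality: $Y\in\partial(f\circ\lambda)(X)$ iff
\[
 \langle X,Y\rangle = (f\circ\lambda)(X) + (f\circ\lambda)^*(Y) = f(\lambda(X)) + f^*(\lambda(Y)).
\]
Now chain the two inequalities $\langle X,Y\rangle \le \lambda(X)^T\lambda(Y) \le f(\lambda(X)) + f^*(\lambda(Y))$, the first being von Neumann and the second Fenchel--Young for $f$. The equality above forces both to hold with equality simultaneously. Equality in von Neumann yields a common diagonalizing $U \in O_X$ with $Y = U(\operatorname{diag}\lambda(Y))U^T$; equality in Fenchel--Young for $f$ yields $\lambda(Y)\in\partial f(\lambda(X))$. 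Setting $v = \lambda(Y)$ gives the claimed representation, and the converse direction follows by reversing the same chain.

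The main obstacle I anticipate is the bookkeeping around eigenvalue ordering. The element $v\in\partial f(\lambda(X))$ need not be sorted, and when $X$ has repeated eigenvalues the set $O_X$ is not a single coset, so one must check that $U(\operatorname{diag}v)U^T$ is well defined and that the family exhausts the subdifferential independently of the chosen $U$. The resolution is exactly the symmetry of $f$: its subdifferential at $\lambda(X)$ is invariant under the subgroup of permutations fixing $\lambda(X)$, which matches the residual freedom in choosing $U\in O_X$, so the formula is consistent. Verifying this compatibility, together with the equality case of the trace inequality, is where the real work lies.
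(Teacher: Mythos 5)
Your proof is correct, but note that the paper does not prove this theorem at all --- it cites it to \cite{drusvyatskiy_paquette:2015,lewis:1996}, and your argument (von Neumann trace inequality with its equality case, the conjugacy formula $(f\circ\lambda)^* = f^*\circ\lambda$, then Fenchel--Young equality chained through both inequalities) is precisely the classical proof of Lewis from those references. The only point worth making explicit is that the biconjugacy step $f^{**}=f$ requires $f$ proper and lower semicontinuous, which you correctly flagged as an added standing assumption; with that, both inclusions of the subdifferential formula go through exactly as you sketched, the symmetry of $f^*$ handling the unsorted $v$ in the converse direction.
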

Hence, to compute an element of the subdifferential of $\rho = \abs{\cdot}_\infty \circ \lambda$
at a matrix $X$, we first compute an eigendecomposition $U (\operatorname{diag} \lambda(X)) U^T$ of $X$.
Then, for any subgradient $v$ of $\abs{\cdot}_\infty$ at $\lambda(X)$, the matrix $U (\operatorname{diag} v) U^T$
will be in the subdifferential of $\rho$.
With this result we can use methods from the families of subgradient methods
or bundle methods~\cite{geiger_kanzow:2002} as local solvers of the smoother.  Such methods are known
to converge only very slowly.  This is, however, not a problem, because smoothers for the
TNNMG method only need to produce a certain quantity of energy decrease, but no exact solution.

From results in \cite{drusvyatskiy_paquette:2015} it also follows how to compute second derivatives
of the Tresca dissipation wherever these second derivatives exist.  This is precisely what is
needed to set up the truncated linear correction problem~\eqref{eq:inexact_linear_correction}.

\subsection{Constructing positive definite correction matrices}

The inexact linear correction (Step~\ref{enum:linear_correction}) involves a linear system on an iteration-dependent subspace $W_\nu$
of the total ansatz space $\R^N$.  The subspace is constructed such that the matrix of second derivatives of $L$ is
well-defined there, and by strong convexity it is positive definite.  In Chapter~\ref{sec:elastoplastic_tnnmg}
we explained that a simple way to construct the subspaces $W_\nu$ is by simply omitting those blocks of coefficients
where the nonlinearity functions $\varphi_i$ are not $C^2$.  This approach is called
\emph{truncation}.

Usually, the best way to implement truncation is to keep working with a Newton matrix of size $N \times N$,
and modify the entries for the truncated degrees of freedom.
The naive way to do this is by overwriting the corresponding
rows and columns of the matrix and right-hand-side vector with zeros, and placing `1's at the relevant
diagonal matrix entries. This ensures that the corrections for the truncated
degrees of freedom are zero, and yet the overall matrix remains positive definite.
However, the value~`1' is arbitrary, and its choice can influence the condition number of the modified
Newton matrix.  This is undesirable, because the error reduction rate of the multigrid iteration step
used to approximate the solution of the linear correction problem depends on the
matrix condition number.  The standard approach of the TNNMG method is therefore to
keep the zeros on the diagonal, and to cope with the resulting semidefiniteness.  This is
quite straightforward: the
multigrid smoother needs to be able to handle zeros on the diagonal, and the prolongation operators
have to be modified to keep coarse grid contributions from spilling into truncated degrees of
freedom~\cite[Chap.\,4.3.1]{graeser:2011},\cite{graeser_sander:2017}.

However, for linear primal plasticity problems we can do better. Since we are using
a piecewise constant discretization of
the plastic strain, a trick allows to use a positive definite matrix instead of a semidefinite
one.  As a consequence, a standard linear multigrid step can be used to compute the linear corrections.

For a given iteration $\nu$, call $H_\nu$ the truncated Newton matrix of $L$ at $w^{\nu+\frac12}$.
Following the ordering of the coefficients introduced in Chapter~\ref{sec:space_discretization},
it has the block form
\begin{equation}
\label{eq:truncated_hesse_matrix}
 H_\nu
 =
 \begin{pmatrix}
  E & C T_\nu \\
  T_\nu^T C^T & T_\nu^T \widetilde{P}_\nu T_\nu
 \end{pmatrix},
\end{equation}
where $E$ is the matrix~\eqref{eq:elasticity_matrix_entries} that relates displacements to displacements,
and $C$ contains the coupling terms~\eqref{eq:coupling_matrix_entries} between displacements and plastic strain.
The matrix $\widetilde{P}_\nu$ is the sum
\begin{equation*}
 \widetilde{P}_\nu
 =
 P +
 \operatorname{diag} \big[ \nabla^2 \varphi_i(R_i^2 (w^{\nu + \frac{1}{2}})), \; i \in \mathcal{N}^\circ_\nu\big],
\end{equation*}
with $P$ given by~\eqref{eq:plastic_strain_matrix_entries}.
The truncation matrix $T_\nu$ is quadratic with $(d_p+1)n_2$ rows, and defined by
\begin{equation*}
 (T_\nu)_{ij}
 \colonequals
 \begin{cases}
  1 \cdot I_{d_p+1} & \text{if $i=j$ and $i \in \mathcal{N}_\nu^\circ$} \\
  0 \cdot I_{d_p+1} & \text{otherwise},
 \end{cases}
\end{equation*}
($I$ the identity matrix).
Multiplication with $T_\nu$ from the left and right zeros out matrix rows and columns, respectively.

For the rest of this section we omit the iteration index $\nu$ for simplicity.
The trick is now that, due to the zero-order discretization
of the plastic strain, $\widetilde{P}$ is block diagonal, and so is $T^T \widetilde{P} T$.
Therefore, its Moore--Penrose pseudo inverse $(T^T \widetilde{P} T)^+$ can be
computed cheaply.  We use this to eliminate the plastic strain and isotropic hardening values
from the correction equation, and apply the multigrid iteration to the Schur complement system.
The Schur complement matrix is positive definite even though the complete Newton matrix is not.
\begin{lemma}
 \label{lem:schur_complement_positive_definite}
 The Schur complement
 $S \colonequals E - C (T^T\widetilde{P}T)^+ C^T$ is positive definite.
\end{lemma}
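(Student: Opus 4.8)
The plan is to reduce $S$ to a genuine Schur complement of a positive definite matrix and then invoke the classical equivalence between positive definiteness of a symmetric block matrix and of its Schur complement. The structural fact I would exploit is that, because the plastic strain is discretized with element-wise (zero-order) elements, both $P$ and hence $\widetilde{P}$ are block diagonal, with one $(d_p+1)\times(d_p+1)$ block per element. Consequently $T^T\widetilde{P}T$ is again block diagonal: its $i$-th diagonal block equals $\widetilde{P}_{ii}$ when $i \in \mathcal{N}_\nu^\circ$ and vanishes otherwise. This makes the Moore--Penrose pseudo inverse completely explicit: $(T^T\widetilde{P}T)^+$ is block diagonal, with block $(\widetilde{P}_{ii})^{-1}$ for $i \in \mathcal{N}_\nu^\circ$ and the zero block otherwise. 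Each kept block is invertible because $\widetilde{P}_{ii} = P_{ii} + \nabla^2\varphi_i$ is the sum of the positive definite $P_{ii}$ and the positive semidefinite Hessian of the convex function $\varphi_i$.

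Second, I would verify that the \emph{reduced} matrix $M \colonequals \left(\begin{smallmatrix} E & C_a \\ C_a^T & \widetilde{P}_a\end{smallmatrix}\right)$, obtained by deleting the rows and columns of the truncated blocks $i \notin \mathcal{N}_\nu^\circ$, is positive definite. Here $C_a$ denotes the columns of $C$ belonging to indices in $\mathcal{N}_\nu^\circ$ and $\widetilde{P}_a$ the corresponding diagonal part of $\widetilde{P}$. Under the hypotheses of Theorem~\ref{thm:ellipticity_of_the_bilinear_form}, the stiffness matrix $A = \left(\begin{smallmatrix} E & C \\ C^T & P\end{smallmatrix}\right)$ is symmetric positive definite; adding the block-diagonal positive semidefinite matrix $\operatorname{diag}[\nabla^2\varphi_i]$ on the active blocks preserves positive definiteness, and $M$ is then a principal submatrix of a positive definite matrix, hence itself positive definite. (Equivalently, $M$ is the Hessian of $L$ restricted to the truncated subspace $W_\nu$, where $L$ is $C^2$ and strongly convex.)

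Third, I would show that the pseudo-inverse product collapses onto the active blocks. Since the range of $(T^T\widetilde{P}T)^+$ is contained in the active subspace and $\widetilde{P}$ is block diagonal, the inactive columns of $C$ never interact with the pseudo inverse, so that $C(T^T\widetilde{P}T)^+C^T = C_a\,\widetilde{P}_a^{-1}\,C_a^T$. Therefore $S = E - C_a\,\widetilde{P}_a^{-1}\,C_a^T$ is exactly the Schur complement of the positive definite block $\widetilde{P}_a$ in the positive definite matrix $M$. The standard Schur complement lemma then yields that $S$ is positive definite.

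I expect the only delicate point to be the bookkeeping around the pseudo inverse, namely justifying the replacement of $(T^T\widetilde{P}T)^+$ by a genuine inverse $\widetilde{P}_a^{-1}$ on the active subspace and confirming that the truncated columns of $C$ drop out. This is precisely where the block-diagonality afforded by the zero-order discretization is essential; without it one could not factor the pseudo inverse blockwise and would instead be forced to work with the semidefinite correction matrix directly, as in the general TNNMG framework.
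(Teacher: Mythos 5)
Your proof is correct, but it follows a genuinely different route from the paper's. The paper never makes the pseudo inverse explicit: writing $\widehat{C} = CT$ and $\widehat{P} = T^T\widetilde{P}T$, it observes that the truncated Hessian $H$ is positive semidefinite with $\ker H = \{0\}\times\ker\widehat{P}$, evaluates the quadratic form of $H$ at the specific test vector $(u,\,-\widehat{P}^+\widehat{C}^T u)$ for $u \neq 0$, and uses the Moore--Penrose identity $\widehat{P}^+\widehat{P}\widehat{P}^+ = \widehat{P}^+$ to collapse that form to $u^T S u$, which is strictly positive because the test vector lies outside $\ker H$. You instead exploit the block diagonality of $\widetilde{P}$ to compute $(T^T\widetilde{P}T)^+$ explicitly, discard the inactive blocks, identify $S$ as the genuine Schur complement of the invertible active block $\widetilde{P}_a$ inside your reduced matrix $M$ (a principal submatrix of $A$ plus a positive semidefinite block-diagonal term, hence positive definite under Theorem~\ref{thm:ellipticity_of_the_bilinear_form}), and then quote the classical Schur complement lemma. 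Both arguments rest on the same underlying fact, namely positive definiteness of the untruncated Hessian restricted to the active subspace; your version is more concrete and has the side benefit of re-deriving the explicit blockwise pseudo inverse and the collapse $C(T^T\widetilde{P}T)^+C^T = C_a\widetilde{P}_a^{-1}C_a^T$, which the implementation needs anyway. One caveat on your closing remark: block diagonality is not actually needed for the lemma to hold. Since $\operatorname{ran}\big((T^T\widetilde{P}T)^+\big) \subseteq \operatorname{ran}(T^T)$ and $\widetilde{P} \succ 0$ already gives $\ker\widehat{P} = \ker T$, the paper's quadratic-form argument goes through for any symmetric positive definite $\widetilde{P}$, block diagonal or not. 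What the zero-order discretization buys is that $(T^T\widetilde{P}T)^+$ is \emph{cheap to compute}, making the Schur complement practical --- it is not what makes the statement true.
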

\begin{proof}
Write $H$ as
 \begin{equation*}
  H
  =
  \begin{pmatrix}
   E & \widehat{C} \\ \widehat{C}^T & \widehat{P}
  \end{pmatrix},
 \end{equation*}
and observe that it is positive semidefinite.  Further, $E$ is positive definite, and by
construction we have
\begin{equation}
\label{eq:kernel_of_truncated_hessian}
 \ker H = \{ 0 \} \times \ker \widehat{P}.
\end{equation}
Let $u \neq 0$, and set $p \colonequals - \widehat{P}^+\widehat{C}^T u$.  By \eqref{eq:kernel_of_truncated_hessian},
$(u,p) \notin \ker H$, and therefore
\begin{align*}
 0 & <
   \begin{pmatrix}
    u \\ -\widehat{P}^+\widehat{C}^T u
   \end{pmatrix}^T
   \begin{pmatrix}
    E & \widehat{C} \\
    \widehat{C}^T & \widehat{P}
   \end{pmatrix}
   \begin{pmatrix}
    u \\ -\widehat{P}^+\widehat{C}^T u
   \end{pmatrix}.
\end{align*}
On the other hand, the right hand side of this is equal to
\begin{equation*}
u^T (E - \widehat{C}\widehat{P}^+\widehat{C}^T) u
 =
u^T (E - CT (T^T\widetilde{P}T)^+T^TC^T) u
 =
u^T S u,
\end{equation*}
and the assertion is shown.
\end{proof}

Using this we can show that solving the Schur complement system does yield
a solution to the original semidefinite problem.  We write $(g_1, T^T g_2)$ for the negative
truncated gradient of the increment functional $L$.
\begin{lemma}
 A vector $(u,p)$ is a solution of the truncated system
\begin{equation*}
 \begin{pmatrix}
  E & CT \\
  T^TC^T & T^T\widetilde{P}T
 \end{pmatrix}
 \begin{pmatrix}
  u \\ p
 \end{pmatrix}
 =
 \begin{pmatrix}
  g_1 \\ T^T g_2
 \end{pmatrix}
\end{equation*}
 if $u$ is a solution of the Schur complement equation
 \begin{equation*}
  S u \colonequals (E - C (T^T\widetilde{P}T)^+ C^T) u = g_1 - CT^T\widetilde{P}^+ T^T g_2,
 \end{equation*}
 and $p$ is a solution of
 \begin{equation}
 \label{eq:second_schur_equation}
  (T^T \widetilde{P} T)p
  =
  T^T g_2 - T^T C^T u.
 \end{equation}
\end{lemma}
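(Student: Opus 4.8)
The plan is to reduce the block system to its two scalar-block row equations and verify each separately, reusing the shorthand $\widehat{C} \colonequals CT$ and $\widehat{P} \colonequals T^T\widetilde{P}T$ introduced in the proof of Lemma~\ref{lem:schur_complement_positive_definite}. With this notation the truncated system reads, row by row,
\begin{align*}
 Eu + \widehat{C}p & = g_1, \\
 \widehat{C}^Tu + \widehat{P}p & = T^Tg_2.
\end{align*}
The bottom equation is literally the second Schur equation~\eqref{eq:second_schur_equation}, rewritten via $\widehat{P} = T^T\widetilde{P}T$ and $\widehat{C}^T = T^TC^T$, so it holds by hypothesis and requires no work. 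All the substance therefore lies in the top equation.

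For the top equation I would start from the Schur complement equation $Su = g_1 - \widehat{C}\widehat{P}^+T^Tg_2$ and substitute $S = E - \widehat{C}\widehat{P}^+\widehat{C}^T$ to obtain
\begin{equation*}
 Eu = g_1 - \widehat{C}\widehat{P}^+\big(T^Tg_2 - \widehat{C}^Tu\big).
\end{equation*}
The second Schur equation identifies the parenthesized term as $\widehat{P}p$, giving $Eu = g_1 - \widehat{C}\widehat{P}^+\widehat{P}p$. It then remains only to show the identity $\widehat{C}\widehat{P}^+\widehat{P}p = \widehat{C}p$, since this immediately yields $Eu + \widehat{C}p = g_1$, as required.

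The crux, and the only genuinely nontrivial point, is this last identity, which I would settle using the truncation structure rather than any delicate computation. Because $\widehat{P}$ is symmetric, $\widehat{P}^+\widehat{P}$ is the orthogonal projection onto $(\ker\widehat{P})^\perp = \operatorname{range}\widehat{P}$, so $\widehat{P}^+\widehat{P}p - p \in \ker\widehat{P}$. By the block-diagonal structure already used for Lemma~\ref{lem:schur_complement_positive_definite}, $\ker\widehat{P}$ is precisely the span of the truncated plastic-strain/hardening blocks (those with $i \notin \mathcal{N}_\nu^\circ$); this is the content of~\eqref{eq:kernel_of_truncated_hessian}. Since $\widehat{C} = CT$ and $T$ annihilates exactly these blocks, $\widehat{C}$ vanishes on $\ker\widehat{P}$, whence $\widehat{C}(\widehat{P}^+\widehat{P}p - p) = 0$. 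I expect the main obstacle to be bookkeeping around the pseudo-inverse: $p$ is determined only modulo $\ker\widehat{P}$, and the argument must make explicit that this ambiguity is invisible to $\widehat{C}$ in the first row — which is exactly what the shared truncation pattern of $\widehat{C}$ and $\widehat{P}$ guarantees.
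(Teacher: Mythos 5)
Your proposal is correct and takes essentially the same route as the paper: the paper's proof simply asserts that ``direct calculations'' verify the block system, and your row-by-row check --- with the key identity $\widehat{C}\widehat{P}^+\widehat{P}p = \widehat{C}p$ justified by $\widehat{P}^+\widehat{P}$ being the orthogonal projection onto $(\ker\widehat{P})^\perp$ and by $\widehat{C}=CT$ vanishing on $\ker\widehat{P}=\ker T$ --- is exactly that omitted calculation. One small correction: the fact $\ker\widehat{P}=\ker T$ is not the content of \eqref{eq:kernel_of_truncated_hessian} (which states $\ker H = \{0\}\times\ker\widehat{P}$); it follows instead from the block-diagonal structure of $\widehat{P}=T^T\widetilde{P}T$ together with positive definiteness of the active blocks $\widetilde{P}_{ii}=P_{ii}+\nabla^2\varphi_i$ (ellipticity of $a(\cdot,\cdot)$ plus convexity of $\varphi_i$), which is what the paper invokes when it writes $\ker T^T\widetilde{P}T = \ker T$.
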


\begin{proof}
 By Lemma~\ref{lem:schur_complement_positive_definite}, $S$ is invertible, and therefore
 $u = S^{-1}(g_1 - CT\widetilde{P}^+ Tg_2)$ is well-defined.  The right-hand side of~\eqref{eq:second_schur_equation}
 is obviously in the range of the matrix $T^T \widetilde{P} T$, and therefore \eqref{eq:second_schur_equation}
 has a solution $p$.  This solution is unique up to the elements of $\ker T^T \widetilde{P} T = \ker T$.
 Direct calculations then show that $(u,p)$ solves the original system.
\end{proof}

\subsection{Comparison with a predictor--corrector method}
\label{sec:predictor_corrector_methods}

The TNNMG method is closely related to the predictor--corrector method traditionally used to solve
primal plasticity problems~\cite{han_reddy:2013}.  Of the different variations that have been proposed, the closest
one is the predictor--corrector method with a consistent tangent predictor and a line search.
This method was studied, e.g., in~\cite{caddemi_martin:1991,martin_caddemi:1994} and~\cite{han_reddy:2013},
and is generally acknowledged to be the most efficient predictor--corrector algorithm for the
given problem.  We will simply refer to it as ``the predictor--corrector method'' in the following.

Let $\nu \in \mathbb{N}_0$ be the iteration number, and $w^\nu = (u^\nu, p^\nu, \eta^\nu)$ the current iterate.
In our notation, and following the presentation of~\cite{han_reddy:2013}, the method has the following form.
\begin{enumerate}
 \item
  \textbf{Consistent tangent predictor}
  \begin{enumerate}
   \item Define the truncated dissipation functional%
   \footnote{Here the method assumes that $\varphi$ is $C^2$-differentiable for all $p \neq 0$.
     Generalizations appear straightforward, but seem to be absent from the literature.
     }
    \begin{equation*}
     \tilde{\varphi}_i^\nu(p)
     \colonequals
     \begin{cases}
      \varphi_i(p) & \text{if $p_i^\nu = 0$} \\
      0 & \text{otherwise},
     \end{cases}
     \qquad
     \forall p \in \R^{d_p}
    \end{equation*}
    where the $\varphi_i$, $i=1,\dots,n_2$ are the algebraic dissipation functionals
    defined in~\eqref{eq:algebraic_dissipation_functional}.
   \item Define the semi-truncated Hesse matrix at $w^\nu$ as
     \begin{equation*}
       H_\nu^\text{pc}
        =
       \begin{pmatrix}
         E   & C \\
         C^T & P+ \operatorname{diag}[\nabla^2 \tilde{\varphi}_i(p_i^\nu,\eta_i^\nu)]
       \end{pmatrix},
     \end{equation*}
     and the semi-truncated gradient
     \begin{equation*}
      G_\nu
      \colonequals
      (w^\nu)^T A - b^T
      +
      \sum_{i=1}^{n_2} \nabla \tilde{\varphi}^\nu_i(p_i^\nu, \eta_i^\nu).
     \end{equation*}
     The latter is simply the gradient of \eqref{eq:algebraic_plasticity_functional},
     with $\varphi_i$ replaced by $\tilde{\varphi}_i^\nu$.%
     \footnote{
     Note that $H_\nu^\text{pc}$ differs from $H_\nu$ as defined in \eqref{eq:truncated_hesse_matrix}.
     While $H_\nu$ has zero rows and columns for degrees of freedom where the dissipation is not
     differentiable, $H_\nu^\text{pc}$ keeps the elastic part there.
     However, as the corresponding degrees of freedom are held fixed there is no practical difference.
     }
   \item Then solve the linear system
    \begin{equation}
     \label{eq:predictor_problem}
     H_\nu^\text{pc} c^\nu = - G_\nu.
    \end{equation}
    for the Newton correction $c^\nu$, but constrain the correction to zero for all blocks $i=1,\dots,n_2$
    with $p_i^\nu = 0$.
  \end{enumerate}
  \item
   \textbf{Damped update / line search}
   \begin{enumerate}
     \item
        Compute $\rho_\nu \in [0,\infty)$ such that
        $L(w^\nu + \rho_\nu c^\nu) < L(w^\nu)$.
     \item
        Set $w^{\nu+\frac12} = w^\nu + \rho_\nu c^\nu$
    \end{enumerate}
    \item
        \textbf{Corrector}
        \begin{enumerate}
            \item
                Set $\tilde{w}^0 = w^{\nu + \frac 12}$
            \item
                For $k=1,\dots,n_2$ do
                \begin{enumerate}
                    \item
                        Compute $\tilde{w}^k \in \tilde{w}^{k-1} + V_k$ as
                        \begin{align}
                        \label{eq:corrector_local_problems}
                            (\tilde{p}^k, \tilde{\eta}^k) = \argmin_{w \in \tilde{w}^{k-1} + V_k} L(w),
                            \qquad
                            \tilde{u}^k = \tilde{u}^{k-1}
                        \end{align}
                \end{enumerate}
            \item
                Set $w^{\nu+1} = \tilde{w}^{n_2}$
        \end{enumerate}
        Here, $V_k$ is the space of all internal variables at grid element $k$.
\end{enumerate}
To the best of our knowledge, no convergence proof for this method appears in the literature.
In~\cite{caddemi_martin:1991,martin_caddemi:1994} and \cite{han_reddy:2013}, energy decrease of the method is shown,
which, however, by itself does not imply convergence.  Furthermore, the proofs require that both
the predictor problem~\eqref{eq:predictor_problem} and the corrector problems~\eqref{eq:corrector_local_problems}
be solved exactly.

Comparing this method with the algorithm from Chapter~\ref{sec:tnnmg_method} it is obvious that the two
are closely related. Indeed, the nonlinear presmoother of TNNMG corresponds to the corrector,
and the linear correction step of TNNMG corresponds to the tangent predictor.
Both methods use a line search.
TNNMG includes a projection onto the admissible set, but in situations without isotropic hardening
this projection is void anyway. While not required by the theory we have found that
it speeds up the computations, because it tends to permit longer step lengths $\rho_\nu$.

The presmoother of TNNMG loops over all degrees of freedom, whereas the corrector touches only the
plastic strains and internal variables.  While this difference has hardly any practical influence,
there are important consequences for the convergence theory: if the TNNMG presmoother stalls,
then we must be at the global minimizer. Indeed, looking at the convergence proof
in~\cite{graeser_sander:2017}, one sees that global convergence of the presmoother alone
is what guarantees convergence of the overall method.

The decisive difference between the two methods is that the TNNMG method
allows to solve both the predictor problem and the local corrector problems very inexactly.
This leads to the observed drastic difference in run-time.
Indeed, at each iteration, the tangent predictor solves an entire elastic problem, whereas the
TNNMG method does only a single iteration of a geometric or algebraic multigrid  method
for the same problem.  This is of course much cheaper.  Nevertheless,
guaranteed global convergence is retained.

Similarly, the TNNMG method converges even if the presmoother minimization problems are solved
only inexactly.  This is not clear for the corrector steps of a predictor--corrector method,
and the question has apparently never been asked in the literature.  This allows to use
approximate minimization schemes for the local subproblems like the ones mentioned in
Section~\ref{sec:smoothers}, when full minimization is expensive.

While the algorithmic differences may appear small, their effect is dramatic.
As the TNNMG method does only one multigrid iteration for the linear correction / predictor problems
instead of a full solve, it needs about two to three times as many iterations to reach a given
accuracy.  However, each iteration needs much less wall-time than a predictor--correction step,
which more than makes up for the increased iteration numbers. The outcome is a clear win
for TNNMG, with more and more lead as the problems get bigger.

\subsection{Extension to gradient plasticity}
\label{sec:gradient_plasticity}

Gradient plasticity is a classic extension of the plasticity model considered in this paper. Unlike in the basic model,
which considers only point values of the plastic strain $\mathbf{p}$ and the hardening variable $\eta$,
strain gradient models introduce additional terms involving the gradients of $\mathbf{p}$ and/or the hardening variables.
For example, in the Aifantis model~\cite{aifantis:1984,han_reddy:2013}, the weak form of the problem is formally
given by~\eqref{eq:variational_inequality}, but the bilinear form contains an additional term involving the
gradient of $\eta$
\begin{equation*}
 a(\mathbf{w},\tilde{\mathbf{w}})
 \colonequals \int_\Omega \Big[\bs{C}(\bs{\varepsilon}(\bs{u}) - \bs{p}) : (\bs{\varepsilon}(\tilde{\bs{u}}) - \tilde{\bs{p}})
   + k_2 \eta \tilde{\eta} + k_3 \nabla \eta \cdot \nabla \tilde{\eta} \Big]\,dx.
\end{equation*}
Consequently, in this model, $L^2$ is not a suitable space for $\eta$ anymore,
and a first-order Sobolev spaces has to be used instead.

The TNNMG method is well suited to solve the increment problem of gradient plasticity models.
In the following we describe the changes for a model containing the gradient of $\eta$ exclusively. The same techniques
also work if the gradient of $\bs{p}$ is involved. The crucial insight
is that, even though gradients of $\eta$ appear, the nonsmooth dissipation function remains
unchanged, and still depends on point values (in a suitable weak sense) of $\mathbf{p}$ and $\eta$ exclusively.
Therefore, the functional $L$ still decouples additively into a quadratic part and a point-wise non-smooth functional.

Getting this decoupling into the algebraic setting requires one trick.  As $\eta$ is now an $H^1$-function,
first-order finite elements are required for a proper space discretization.  To still be able to group the degrees of
freedom for $\bs{p}$ and $\eta$ together, we also use first-order finite elements for $\bs{p}$.
Hence, the algebraic increment functional corresponding to \eqref{eq:algebraic_plasticity_functional_unlumped} is
\begin{align}
 \nonumber
 \widetilde{L} & \; : (\R^d)^{n_1} \times (\R^{d_p} \times \R)^{n_1} \to \R \cup \{ \infty \} \\
 \label{eq:unlumped_gradient_plasticity_functional}
 \widetilde{L}(w)
 & \colonequals
 \frac{1}{2} w^T A w - b^T w
   + \int_\Omega D\bigg(\sum_{i=1}^{n_1} \phi_i(x) \sum_{j=1}^{d_p} B_j p_{i,j}, \sum_{i=1}^{n_1} \phi_i(x) \eta_i\bigg)\,dx.
\end{align}
However, as the first-order nodal basis functions $\phi_i$ are not piecewise constant, the sum over $i$ cannot
be pulled out of the dissipation function $D$, and the algebraic functional does {\em not} decouple additively!
We therefore {\em approximate} $\widetilde L$ by the functional
\begin{equation*}
 L(w)
  \colonequals
 \frac{1}{2} w^T A w - b^T w
    + \sum_{i=1}^{n_1} \int_\Omega \phi_i\,dx \cdot D\bigg(\sum_{j=1}^{d_p} B_j p_{i,j}, \eta_i \bigg).
\end{equation*}
The resulting functional is still coercive, convex, and lower semi-continuous, and even strictly convex if $\widetilde{L}$ is.
It can be interpreted as an approximation of the integral
in~\eqref{eq:unlumped_gradient_plasticity_functional} by a quadrature rule using only the Lagrange nodes of $\{\phi_i\}_{i=1}^n$.
This trick has been used frequently in other applications of the TNNMG
method~\cite{graeser_sack_sander:2009,graeser_sander:2014,graeser:2011}.

\section{Numerical experiments}
\label{sec:numerical_experiments}

In this chapter we present numerical results that demonstrate the efficiency of the TNNMG solver.
We use a benchmark problem involving von Mises plasticity and kinematic hardening, and compare
with a predictor--corrector method with a consistent tangent predictor.

\subsection{Square with a hole}

Our benchmark example uses the von Mises yield condition and kinematic hardening on a two-dimensional domain.
It is inspired by a similar benchmark proposed in~\cite{stein_wriggers_rieger_schmidt:2002}.

\begin{figure}
 \begin{center}
  \begin{minipage}{0.3\textwidth}
  \includegraphics[width=\textwidth]{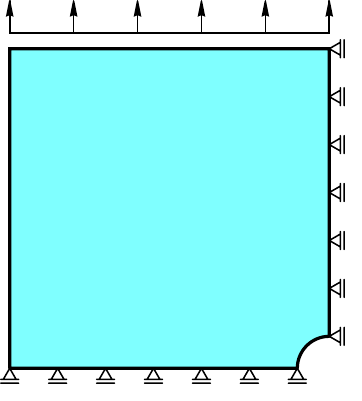}
  \end{minipage}
  \hspace{0.05\textwidth}
  \begin{minipage}{0.3\textwidth}
  \includegraphics[width=\textwidth]{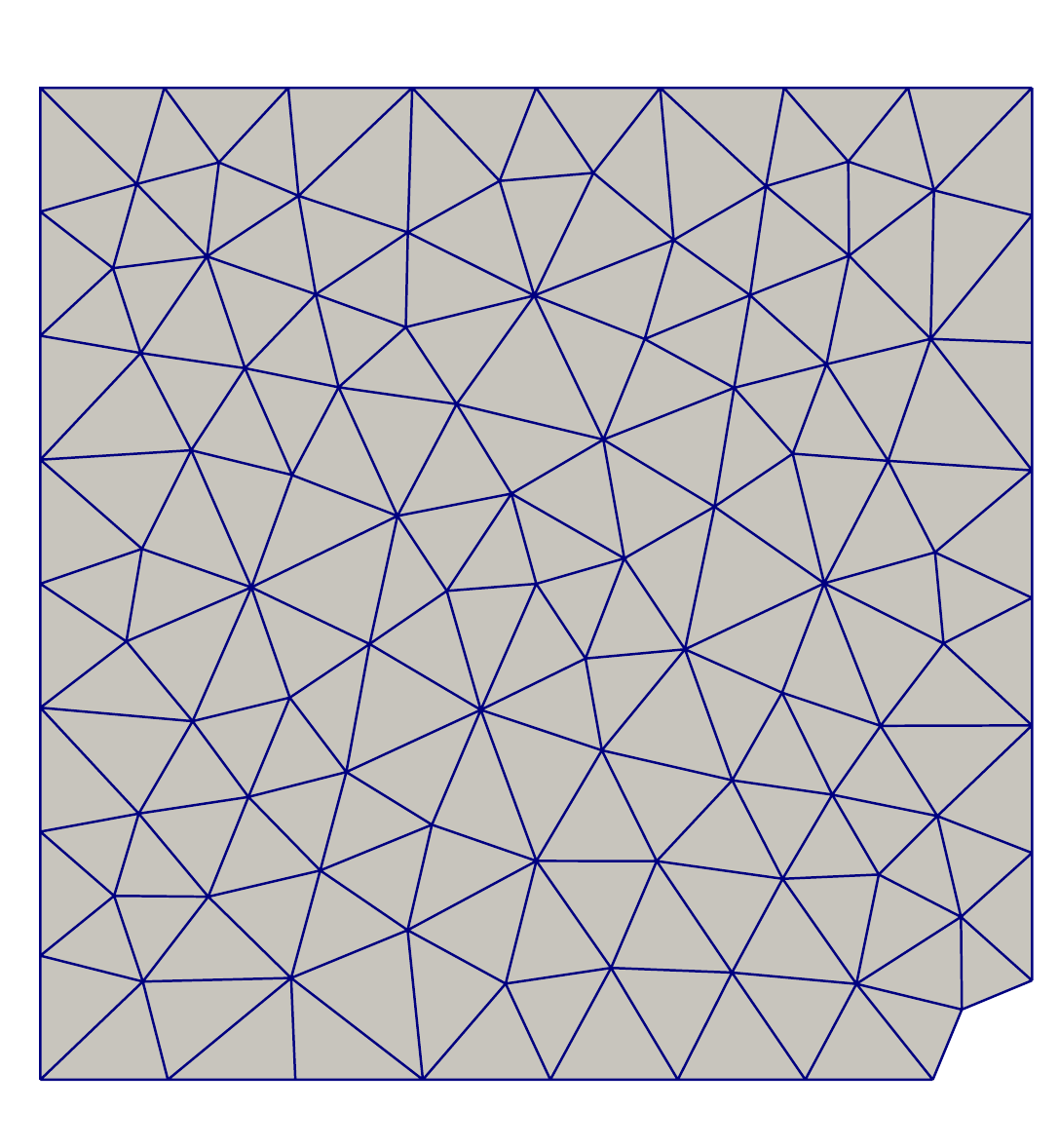}
  \end{minipage}
  \begin{minipage}{0.3\textwidth}
  {\small
   \begin{tabular}{r|r|r}
 levels & cells & vertices \\
 \hline
   1    & 176    &  105 \\
   2    & 704    & 385 \\
   3    & 2\,816   & 1\,473 \\
   4    & 11\,264  & 5\,761 \\
   5    & 45\,056  & 22\,785 \\
   6    & 180\,224 & 90\,625 \\
 \end{tabular}
 }
 \end{minipage}
 \end{center}
 \caption{Geometry and boundary conditions of the numerical benchmark.
     As the grid is refined, the domain boundary approximates the hole better and better.}
 \label{fig:plate_with_hole_geometry}
\end{figure}

Geometry and boundary conditions are illustrated in Figure~\ref{fig:plate_with_hole_geometry}.  We consider a $10 \times 10$ square
domain, from which the quarter of a unit circle has been cut out from the lower right corner (lengths are measured
in millimeters).  Normal displacements of the lower
and right sides of the square are prohibited by setting the Dirichlet boundary conditions
\begin{equation*}
  u_1(10, x_2 ) = 0, \quad x_2 \in (1, 10),
  \qquad
  u_2 (x_1, 0) = 0, \quad x_1 \in (0, 9).
\end{equation*}
Otherwise, the square is free to move. A time-dependent upward-pointing surface force
\begin{equation*}
  \langle l(t), \tilde{\bu}\rangle = 100\,t \int_0^{10} \tilde{\bu}(x_1 , 10)\cdot \be_2 \, dx_1.
\end{equation*}
is applied to the upper horizontal side.
For the material parameters we choose Lam\'e parameters  $\mu = 6.5\cdot 10^6\,[\mathrm{N}/\mathrm{mm}]$
and $\lambda = 10^7\,[\mathrm{N}/\mathrm{mm}]$.
The yield stress is set to $\sigma_c = 450\,[\mathrm{N}/\mathrm{mm}]$.
We use a kinematic hardening modulus of $k_1 = 3\cdot 10^6\,\mathrm{N/mm}$.
For the implementation of the inactive set $\mathcal{N}_\nu^\circ$ in~\eqref{eq:inactive_set}
we say that $\nabla^2\varphi_i$ exists and is continuous for all arguments $p$ with $\norm{p} \ge 10^{-10}$.

The domain is discretized using an unstructured triangle grid consisting of 176 triangular elements, pictured in Figure~\ref{fig:plate_with_hole_geometry}, right.
All grids used later for convergence rate measurements are created from this grid by uniform refinement.
The table in Figure~\ref{fig:plate_with_hole_geometry} shows the problem sizes for the different refinement levels.

\begin{figure}
 \begin{center}
  \includegraphics[width=0.2\textwidth]{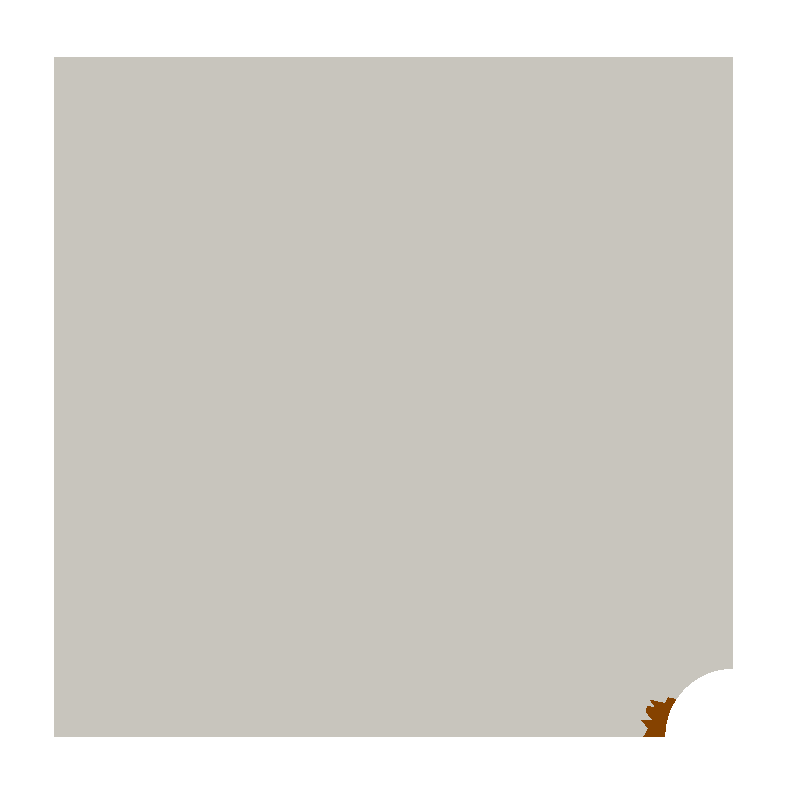}
  \includegraphics[width=0.2\textwidth]{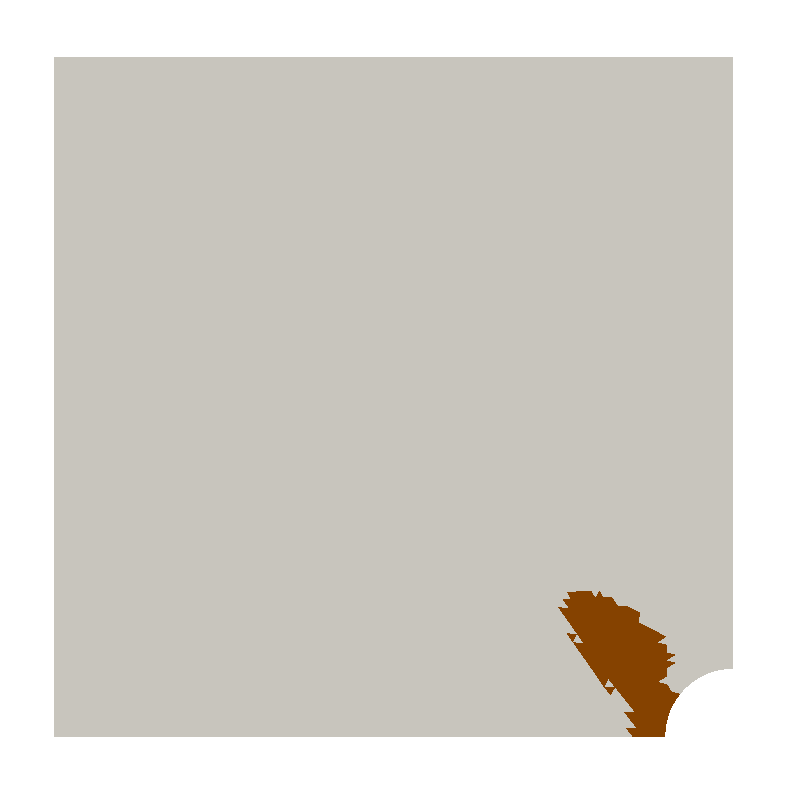}
  \includegraphics[width=0.2\textwidth]{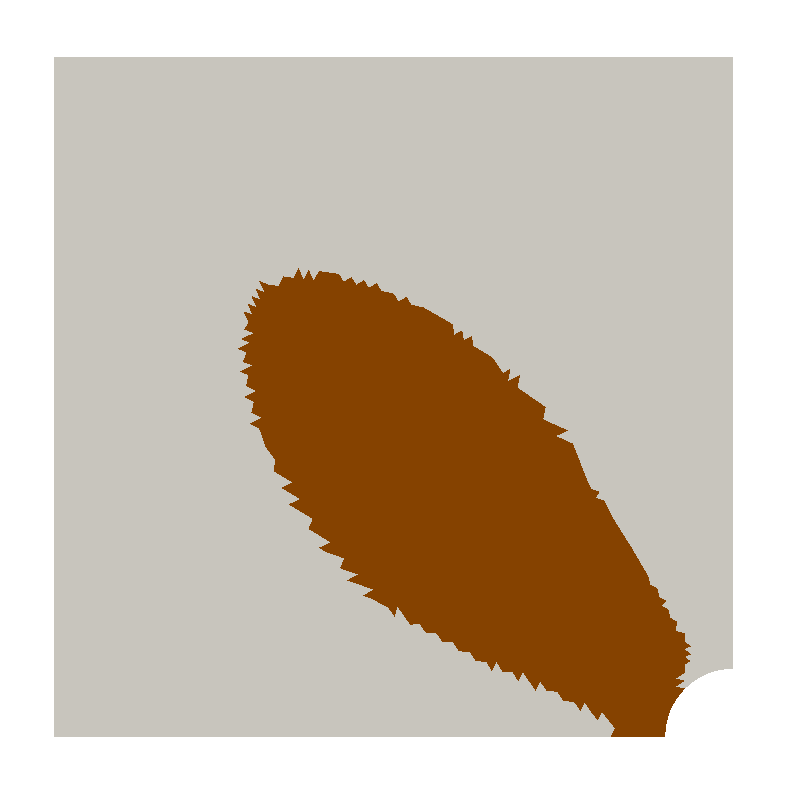}
  \includegraphics[width=0.2\textwidth]{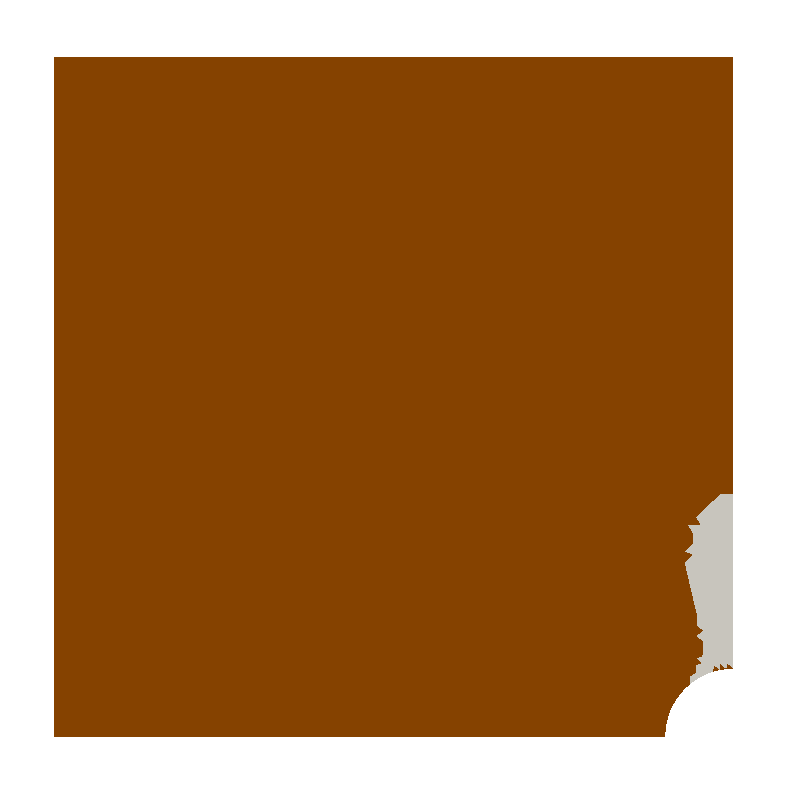}
 \end{center}
 \caption{Zone with plastic deformation at time steps 3, 4, 5, and 6}
 \label{fig:plastic_zone_evolution}
\end{figure}

We run the simulation for 20 time steps.  Figure~\ref{fig:plastic_zone_evolution} shows the evolution of the area of plastic deformation
using a grid obtained by three steps of uniform refinement.  While the object remains completely elastic for the first time step,
large parts of it show plastic flow rather quickly.

\bigskip

\begin{figure}
 \begin{center}
  \includegraphics[width=0.47\textwidth]{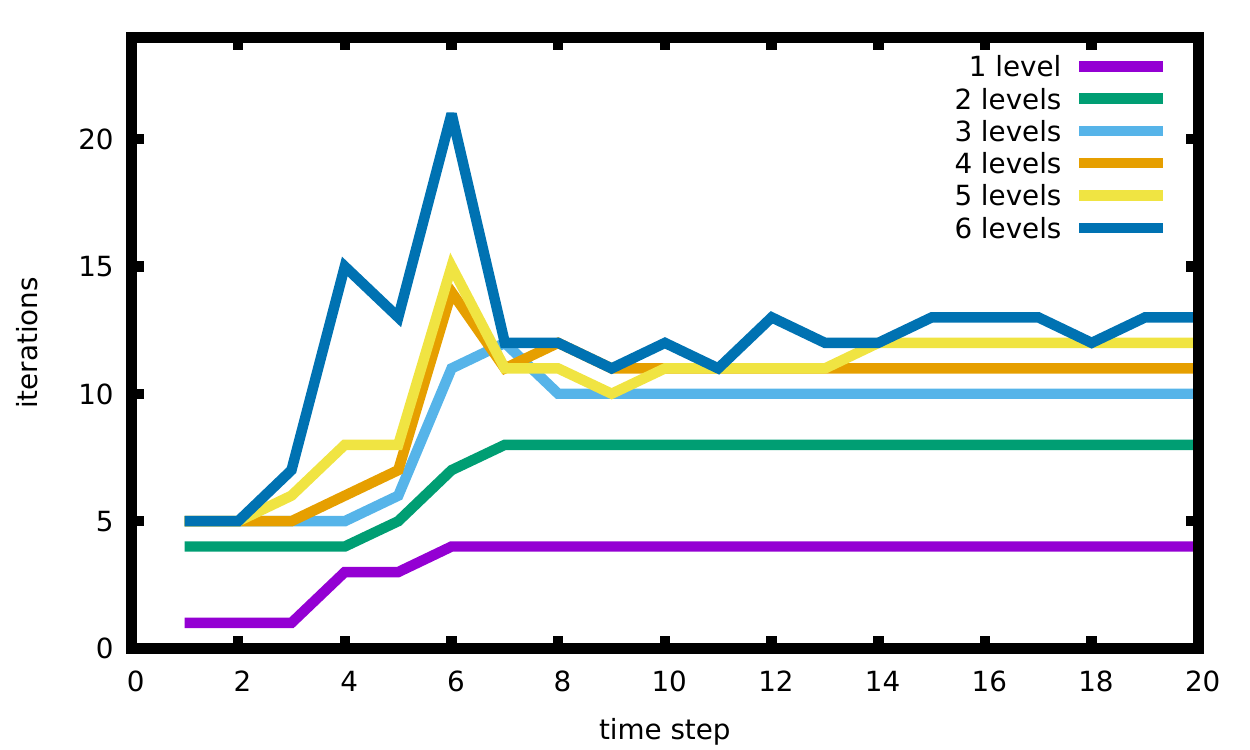}
  \includegraphics[width=0.47\textwidth]{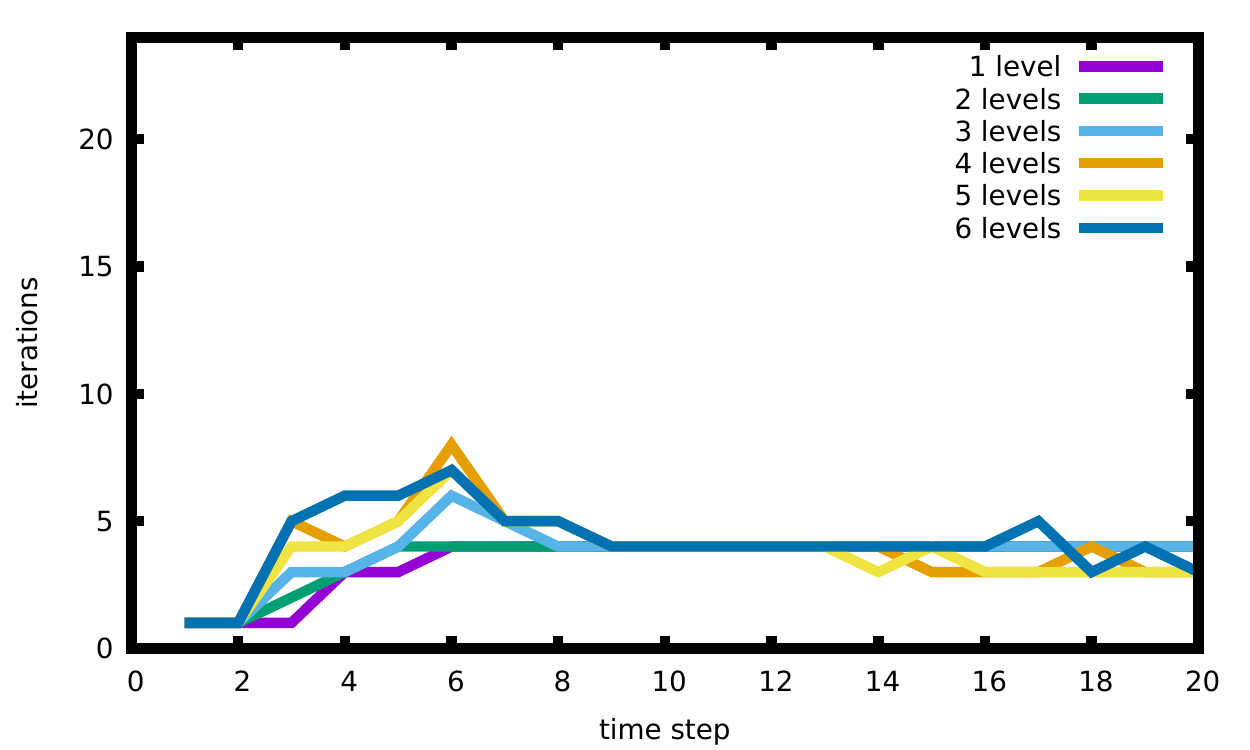}
 \end{center}
 \caption{Iterations per time step: TNNMG (left) vs.\ predictor--corrector (right)}
 \label{fig:iterations_per_time_step}
\end{figure}

We now investigate the convergence behavior of the multigrid solver.  For this, at each time step, we let the solver iterate until
the energy norm of the correction drops below $10^{-7}$.  For each time step the starting iterate
is the zero vector. Then, we use the last iterate $w_* = (u_*, p_*)$ as the reference solution of that
time step, and compute errors of the previous iterates as
\begin{equation*}
 e_\nu^2
 \colonequals
 \norm{u_\nu - u_*}_{H^1}^2 + \norm{p_\nu - p_*}_{L^2}^2,
\end{equation*}
where we have identified the coefficient vectors $u_\nu - u_*$ and $p_\nu - p_*$ with the
corresponding discrete functions.
Figure~\ref{fig:iterations_per_time_step}, left, shows the number of multigrid iterations
needed to make the error $e_\nu$ drop below $10^{-9}$.  The different lines denote the different grid sizes.  As can be seen,
for the most part, this number remains bounded independent from the mesh size, a behavior known from linear multigrid methods
for purely elastic problems.  The behavior is more agitated during time steps 4--6.  This is the time period
where the plastification front sweeps across the domain.  Since here the problems contain a free boundary,
the corresponding time steps are the most challenging ones.

\subsection{Comparing with a predictor--corrector method}

To show the efficiency improvements brought by the TNNMG solver, we now compare it to
the predictor--corrector method with a tangent predictor
and a line search as described in Chapter~\ref{sec:predictor_corrector_methods}.
In the overview chapter~12.2 of~\cite{han_reddy:2013}, it is hailed as the most competitive method
for solving small-strain primal plasticity problems.
We implemented this method in the same code as the TNNMG method, to allow for a fair comparison.
We chose the UMFPack sparse direct solver%
\footnote{\url{http://faculty.cse.tamu.edu/davis/suitesparse.html}}
to solve the elastic predictor problems.
UMFPack is one of the standard open-source direct solvers for general sparse linear systems of equations~\cite{davis:2004}.

We now do precisely the same benchmark test problem as in the previous section, but use the predictor--corrector method.
Figure~\ref{fig:iterations_per_time_step}, right, shows the number of iterations per time step for grids of different
sizes.  Observe that the predictor--corrector method consistently needs about one third of the number of iterations
of the TNNMG method.  Keep in mind, however, that these are {\em predictor--corrector} iterations now, which are
much more expensive than TNNMG iterations.

To quantify the difference, we have measured the run-time per iteration of both methods on grids of different
sizes.
Figure~\ref{fig:walltime_tnnmg_predcorr} shows these run times, \emph{normalized} with the number of degrees
of freedom. We see that for the TNNMG method, these times remain essentially constant. This shows that
multigrid methods can solve algebraic problems with linear time complexity even
in nonlinear situations.  In contrast, normalized wall-times  of the predictor--corrector method go up
with increasing mesh resolution.  On the finest grid, TNNMG is roughly a factor of 40 faster than the
predictor--corrector method, and this ratio will not cease to increase for even larger meshes. The reason
for this is that the predictor-corrector method solves one linear problem at each iteration, whereas
TNNMG performs only a single multigrid iteration.

The predictor--corrector wall-time could be improved by using a linear multigrid for the predictor problem,
rather than the direct sparse solver. In this case the two algorithms would be even more similar to each
other: where TNNMG does one multigrid iteration, the predictor--corrector method would do as many as needed
to solve the predictor problem completely. We have decided no to do this because direct solvers are more common.

\begin{figure}
 \begin{center}
  \includegraphics[width=0.47\textwidth]{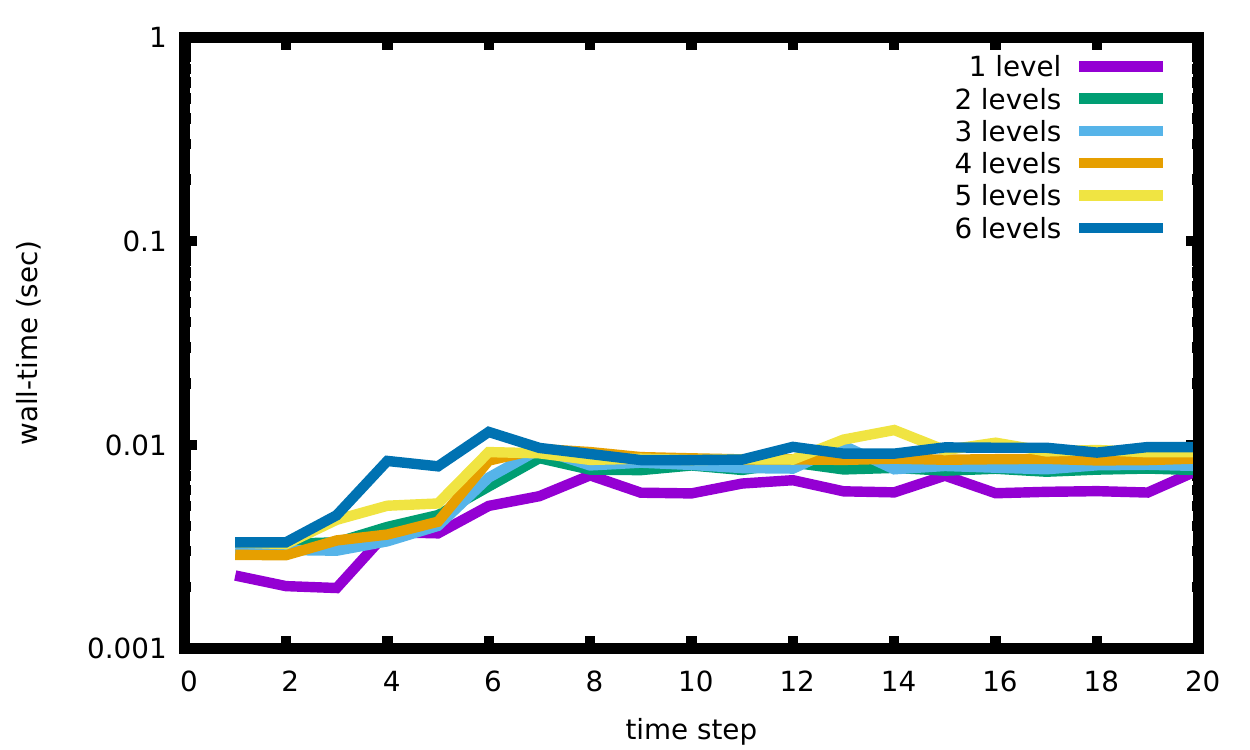}
  \includegraphics[width=0.47\textwidth]{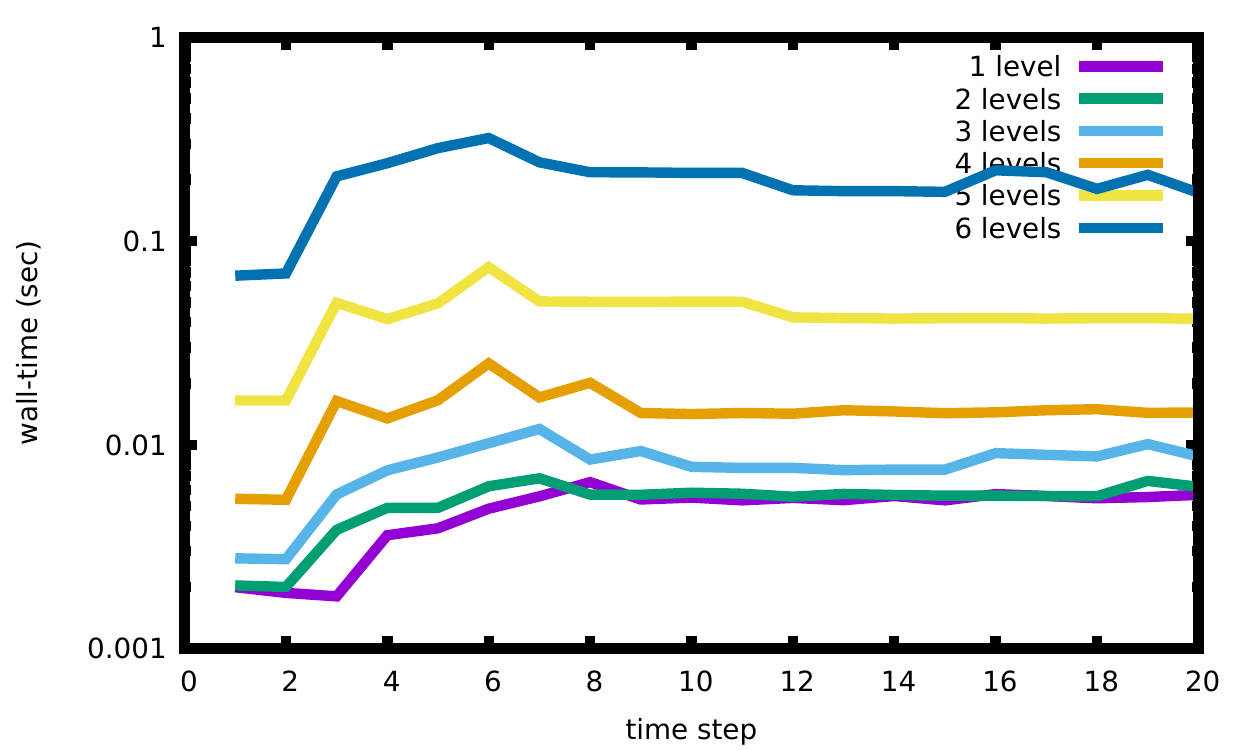}
 \end{center}
 \caption{Normalized wall-time per time step: TNNMG (left) vs.\ predictor--corrector (right)}
 \label{fig:walltime_tnnmg_predcorr}
\end{figure}

\section*{Appendix: The dissipation function for the Tresca yield criterion}

In this appendix we construct the explicit form of the dissipation function for the three-dimensional Tresca yield criterion,
because, while the result is well known, we have not found a proof in the literature.  Remember that the dissipation function $D$ for a given
elastic region $K$ is defined as the support function of $K$
\begin{equation*}
 D : \mathbb{S}^3_0 \to \R \cup \{ \infty \},
 \qquad
 D(\mathbf{p}) \colonequals \sup_{\bs{\sigma} \in K} \{ \mathbf{p} : \bs{\sigma} \}.
\end{equation*}
For the Tresca model, the elastic region $K$ is defined as
\begin{equation*}
 K
 \colonequals
 \{ \bs{\sigma} \in \mathbb{S}^3 \; : \; \max_{i,j=1,\dots,3} \abs{ \sigma_i -\sigma_j } \le \sigma_0 \},
\end{equation*}
where $\sigma_1, \sigma_2, \sigma_3$ are the eigenvalues of $\bs{\sigma}$, and
$\sigma_0 \ge 0$ is the uniaxial yield stress.

\begin{theorem}
\label{thm:tresca_dissipation_function}
 The dissipation function of the Tresca yield criterion is
 \begin{equation*}
  D(\mathbf{p})
  =
  \rho(\mathbf{p})
  \colonequals
  \max \{ \abs{p_1}, \abs{p_2}, \abs{p_3} \},
 \end{equation*}
 where $p_1, p_2, p_3$ are the eigenvalues of $\mathbf{p}$.
\end{theorem}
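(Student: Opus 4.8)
The plan is to reduce the optimization over symmetric matrices to an optimization over eigenvalue vectors, and then finish with two elementary computations. First I would record that the admissible set $K$ is a \emph{spectral set}: it is defined purely through the eigenvalues of $\bs{\sigma}$, so $K = \lambda^{-1}(k)$ for the permutation-invariant polyhedron $k \colonequals \{x \in \R^3 : \max_{i,j}\abs{x_i - x_j} \le \sigma_0\}$, where $\lambda$ sends a symmetric matrix to its (unordered) vector of eigenvalues (not to be confused with the Lamé constant). In particular $K$ is invariant under orthogonal conjugation $\bs{\sigma}\mapsto U\bs{\sigma}U^T$. By von Neumann's trace inequality, $\operatorname{tr}(\mathbf{p}\bs{\sigma}) \le \langle \lambda(\mathbf{p})^{\downarrow}, \lambda(\bs{\sigma})^{\downarrow}\rangle$ for all symmetric $\mathbf{p},\bs{\sigma}$, with equality when the two matrices are simultaneously diagonalizable with eigenvalues in matching order. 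Since $K$ contains every orthogonal conjugate of each of its elements, I can choose $\bs{\sigma}$ diagonal in the eigenbasis of $\mathbf{p}$ to attain equality, and obtain the support-function identity
\[
 D(\mathbf{p}) = \sup_{\bs{\sigma}\in K}\operatorname{tr}(\mathbf{p}\bs{\sigma}) = \sup_{x\in k}\langle \lambda(\mathbf{p}), x\rangle,
\]
which is exactly the spectral-function calculus of \cite{lewis:1996}. Crucially, $k$ is unbounded in the direction $\mathbf{1} = (1,1,1)$, so this supremum is finite precisely because $\mathbf{p}$ is trace-free, i.e.\ $\langle \lambda(\mathbf{p}), \mathbf{1}\rangle = \operatorname{tr}\mathbf{p} = 0$.

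Next I would solve the finite-dimensional problem $\sup_{x\in k}\langle p, x\rangle$ with $p \colonequals \lambda(\mathbf{p})$ and $\sum_i p_i = 0$. Writing any $x \in k$ as $x = s + t\mathbf{1}$ with $t = \min_i x_i$ and $s \in [0,\sigma_0]^3$, the diagonal shift drops out because $\langle p, \mathbf{1}\rangle = 0$, so $k$ may be replaced by the box $[0,\sigma_0]^3$. Maximizing the linear functional $\langle p, s\rangle$ over a box is immediate: set $s_i = \sigma_0$ where $p_i > 0$ and $s_i = 0$ otherwise, which yields $D(\mathbf{p}) = \sigma_0 \sum_i (p_i)_+$, where $(p_i)_+ = \max\{p_i, 0\}$.

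It then remains to identify $\sum_i (p_i)_+$ with $\rho(\mathbf{p})$ for a trace-free triple. Here I would use $\sum_i (p_i)_+ = \tfrac12\big(\sum_i\abs{p_i} + \sum_i p_i\big) = \tfrac12\sum_i\abs{p_i}$, and then prove the dimension-specific identity $\tfrac12\sum_{i=1}^3\abs{p_i} = \max_i\abs{p_i}$ valid whenever $p_1+p_2+p_3 = 0$: if $\abs{p_1}$ is the largest, maximality forces $p_2$ and $p_3$ to share a sign (otherwise $p_3 = -p_1-p_2$ would exceed $p_1$ in modulus), whence $\abs{p_1} = \abs{p_2+p_3} = \abs{p_2}+\abs{p_3}$ and the claim follows. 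This gives $D(\mathbf{p}) = \sigma_0\rho(\mathbf{p})$, which is the asserted form (with $\sigma_0$ normalized as in the statement). I expect the first step---the passage from the supremum over matrices to the supremum over eigenvalue vectors---to be the main obstacle, since it is the only place where genuine matrix analysis enters; the remaining steps are an elementary linear program and a short case distinction. I would also remark that the identity $\tfrac12\sum\abs{p_i} = \max_i\abs{p_i}$ fails for $d \ge 4$, which is exactly why the explicit spectral-radius form of $D$ is restricted to $d \in \{2,3\}$.
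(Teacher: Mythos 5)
Your proof is correct, and its second half takes a genuinely different route from the paper's. The first step coincides with the paper's Lemma~\ref{lem:tresca_dissipation_function_intermediate}: both reduce the supremum over $\bs{\sigma} \in K$ to a supremum over eigenvalue vectors using the orthogonal invariance of the spectral set $K$; your appeal to von Neumann's trace inequality is in fact a tighter justification than the paper's argument, which diagonalizes $\bs{\sigma}$ inside the supremum somewhat informally. After that the proofs diverge. The paper uses trace-freeness to shift by $\sigma_3$, reducing to a two-dimensional problem whose admissible set is the Tresca hexagon, and then invokes the support-function formula for a convex hull of finitely many points \cite[p.~318]{rockafellar_wets:2010} to get $D(\mathbf{p}) = \sigma_0\max\{p_1, p_1+p_2, p_2, -p_1, -p_1-p_2, -p_2\}$, which trace-freeness converts to the spectral radius. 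You instead decompose the admissible set as $k = [0,\sigma_0]^3 + \R\mathbf{1}$ (your identification $x = s + t\mathbf{1}$, $t = \min_i x_i$, checks out in both directions), so that trace-freeness kills the recession direction $\mathbf{1}$ and the problem becomes a linear program over a box, giving $D(\mathbf{p}) = \sigma_0\sum_i (p_i)_+ = \tfrac{\sigma_0}{2}\sum_i \abs{p_i}$; the spectral-radius form then follows from the elementary identity $\tfrac12\sum_{i=1}^3\abs{p_i} = \max_i\abs{p_i}$ for trace-free triples, whose case analysis you carry out correctly. Your route buys two things the paper's does not: it works uniformly in every dimension $d$, producing the general formula $\tfrac{\sigma_0}{2}\sum_i\abs{p_i}$ as a byproduct, and it isolates exactly where the restriction $d \in \{2,3\}$ enters (the final identity fails for $d \ge 4$, e.g.\ for $(1,1,-1,-1)$), which explains the caveat attached to \eqref{eq:tresca_dissipation_with_isotropic_hardening}. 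The paper's route buys the explicit geometry of the Tresca hexagon and the connection to the classical picture. You are also right to flag the normalization: the theorem as stated drops the factor $\sigma_0$ that both proofs actually produce.
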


We first show an intermediate result.
\begin{lemma}
\label{lem:tresca_dissipation_function_intermediate}
For any fixed matrix $\mathbf{p} \in \mathbb{S}^d$ we get
\begin{equation*}
 \sup_{\bs{\sigma} \in K} \{ \mathbf{p} : \bs{\sigma} \}
 =
 \sup_{\sigma_1, \dots, \sigma_d \in \R} \Big\{ \sum_{i=1}^d p_i \sigma_i
 \; : \;
 \max_{i,j=1,\dots,d} \abs{ \sigma_i -\sigma_j } \le \sigma_0 \Big\}.
\end{equation*}
\end{lemma}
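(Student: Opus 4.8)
The plan is to prove the two inequalities separately, exploiting two structural facts. First, the elastic region $K$ is a \emph{spectral set}: it is invariant under orthogonal conjugation $\bs{\sigma} \mapsto Q \bs{\sigma} Q^T$, since this leaves the eigenvalues unchanged. Second, the defining constraint $\max_{i,j} \abs{\sigma_i - \sigma_j} \le \sigma_0$ is symmetric under permutations of the $\sigma_i$, so the admissible set of eigenvalue tuples on the right-hand side is permutation-invariant. Throughout I use $\mathbf{p} : \bs{\sigma} = \operatorname{tr}(\mathbf{p}\bs{\sigma})$ for the two symmetric matrices.

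For the inequality ``$\ge$'', I would diagonalize $\mathbf{p} = U (\operatorname{diag} p) U^T$ with $U$ orthogonal, and for any admissible tuple $(\sigma_1,\dots,\sigma_d)$ satisfying the difference constraint, set $\bs{\sigma} \colonequals U (\operatorname{diag} \sigma) U^T$. Then $\bs{\sigma}$ has exactly the eigenvalues $\sigma_1,\dots,\sigma_d$, hence lies in $K$, and by construction $\mathbf{p} : \bs{\sigma} = \operatorname{tr}\big((\operatorname{diag} p)(\operatorname{diag}\sigma)\big) = \sum_{i=1}^d p_i \sigma_i$. Thus every value entering the right-hand supremum is attained by some element of $K$, and taking the supremum over all admissible tuples shows that the left-hand side dominates the right-hand side.

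For the inequality ``$\le$'', the key tool is von Neumann's trace inequality: for symmetric matrices one has $\operatorname{tr}(\mathbf{p}\bs{\sigma}) \le \sum_{i=1}^d p_i^{\downarrow}\,\sigma_i^{\downarrow}$, where $p_i^{\downarrow}$ and $\sigma_i^{\downarrow}$ denote the eigenvalues sorted in decreasing order. Given any $\bs{\sigma} \in K$, its eigenvalues satisfy the permutation-invariant constraint, so the sorted tuple $(\sigma_1^{\downarrow},\dots,\sigma_d^{\downarrow})$ is admissible; reordering it to pair $\sigma_k^{\downarrow}$ with the index $i$ for which $p_i = p_k^{\downarrow}$ produces an admissible $\sigma$-tuple whose value is precisely $\sum_i p_i^{\downarrow}\sigma_i^{\downarrow}$. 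Hence $\operatorname{tr}(\mathbf{p}\bs{\sigma})$ is bounded by a value entering the right-hand supremum, and taking the supremum over $\bs{\sigma}\in K$ gives the claim.

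The main obstacle is the ``$\le$'' direction, and specifically von Neumann's inequality; everything else is bookkeeping. I would either invoke it directly or derive it from the Ky Fan characterization underlying the spectral-function results of Lewis already cited in the paper—indeed, the present lemma is essentially the statement that the support function of a spectral set equals the support function of its underlying eigenvalue set. The one point that needs genuine care is that the right-hand supremum is unaffected by the (arbitrary) ordering of the fixed eigenvalues $p_i$, and that the sorted pairing $\sum_i p_i^{\downarrow}\sigma_i^{\downarrow}$ actually occurs as an admissible term; both follow from the permutation symmetry of the constraint together with the rearrangement inequality, as indicated above. Note finally that no finiteness assumption is needed: the identity holds in $\R \cup \{\infty\}$, the value $+\infty$ occurring on both sides exactly when $\mathbf{p}$ has nonzero trace.
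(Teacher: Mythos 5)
Your proposal is correct, but it takes a genuinely different route from the paper's proof. The paper argues purely by invariance: it first shows that $\sup_{\bs{\sigma}\in K}\{\widetilde{Q}^T\bs{\sigma}\widetilde{Q} : \mathbf{p}\} = \sup_{\bs{\sigma}\in K}\{\bs{\sigma} : \mathbf{p}\}$ for every fixed orthogonal $\widetilde{Q}$, using that $K$ is stable under orthogonal conjugation, and then concludes by ``picking $Q_1$ to diagonalize $\bs{\sigma}$ and $Q_2$ to diagonalize $\mathbf{p}$''. You instead split the identity into two inequalities: the lower bound by building, for each admissible eigenvalue tuple, the stress $\bs{\sigma} = U(\operatorname{diag}\sigma)U^T$ in the eigenbasis $U$ of $\mathbf{p}$ (this matches the constructive content of the paper's argument), and the upper bound via the Ky Fan/von Neumann trace inequality $\operatorname{tr}(\mathbf{p}\bs{\sigma}) \le \sum_i p_i^{\downarrow}\sigma_i^{\downarrow}$ combined with the permutation invariance of the constraint set. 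It is worth pointing out that your route is more careful exactly where the paper is loosest: in the paper's proof the matrix $Q_1$ must depend on $\bs{\sigma}$, which ranges inside the supremum, so the final sentence quietly subsumes a real step --- namely that when $\mathbf{p}$ and $\bs{\sigma}$ are diagonal in \emph{different} bases, the cross term $\operatorname{tr}\big(\operatorname{diag}(p)\,W\operatorname{diag}(\sigma)W^T\big)$, $W$ orthogonal, is dominated by some permuted pairing $\sum_i p_i \sigma_{\pi(i)}$ of the eigenvalues. That is precisely the content of the trace inequality you invoke (equivalently, of a Birkhoff or Schur--Horn majorization argument, since $(W_{ij}^2)$ is doubly stochastic). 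So your proof buys rigor at the crux at the cost of importing a nontrivial classical inequality, whereas the paper's argument is shorter and formally self-contained but informal at that step; your auxiliary remarks --- that ties among the $p_i$ are harmless in the sorted pairing, and that both sides equal $+\infty$ precisely when $\operatorname{tr}\mathbf{p}\neq 0$ --- are also correct, the latter because the admissible tuples form a bounded set plus the all-ones direction, along which $\sum_i p_i\sigma_i$ is constant if and only if $\mathbf{p}$ is trace-free.
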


\begin{proof}
Let $Q_1$ and $Q_2$ be any two orthogonal matrices.  Then
\begin{equation}
\label{eq:maximal_work_diagonal_1}
 \sup_{\bs{\sigma} \in K} \big\{ Q_1^T \bs{\sigma}Q_1 : Q_2^T\mathbf{p} Q_2 \big\}
 =
 \sup_{\bs{\sigma} \in K} \big\{ Q_2 Q_1^T \bs{\sigma}Q_1 Q_2^T : \mathbf{p} \big\}
 =
 \sup_{\bs{\sigma} \in K} \{ \widetilde{Q}^T \bs{\sigma} \widetilde{Q} : \mathbf{p} \big\},
\end{equation}
where we have written $\widetilde{Q} = Q_1 Q_2^T$.  The definition of $K$ depends only
on the eigenvalues of $\bs{\sigma}$.  Hence $\bs{\sigma}$ is in $K$ if and only if
$\widetilde{Q}^T \bs{\sigma} \widetilde{Q}$ is.  Therefore, the last expression is equal to
\begin{equation}
\label{eq:maximal_work_diagonal_2}
 \sup_{\bs{\sigma} \in K} \{ \widetilde{Q}^T \bs{\sigma} \widetilde{Q} : \mathbf{p} \}
 =
 \sup_{\bs{\sigma} \in K} \{ \bs{\sigma} : \mathbf{p} \}.
\end{equation}
Since this holds for all orthogonal matrices $Q_1$ and $Q_2$, we can pick $Q_1$ such as
to diagonalize $\bs{\sigma}$, and $Q_2$ such as to diagonalize $\mathbf{p}$.  Then
\eqref{eq:maximal_work_diagonal_1} together with \eqref{eq:maximal_work_diagonal_2} is the assertion.
\end{proof}

\bigskip

\begin{figure}
\begin{center}
\begin{overpic}[scale=0.5]{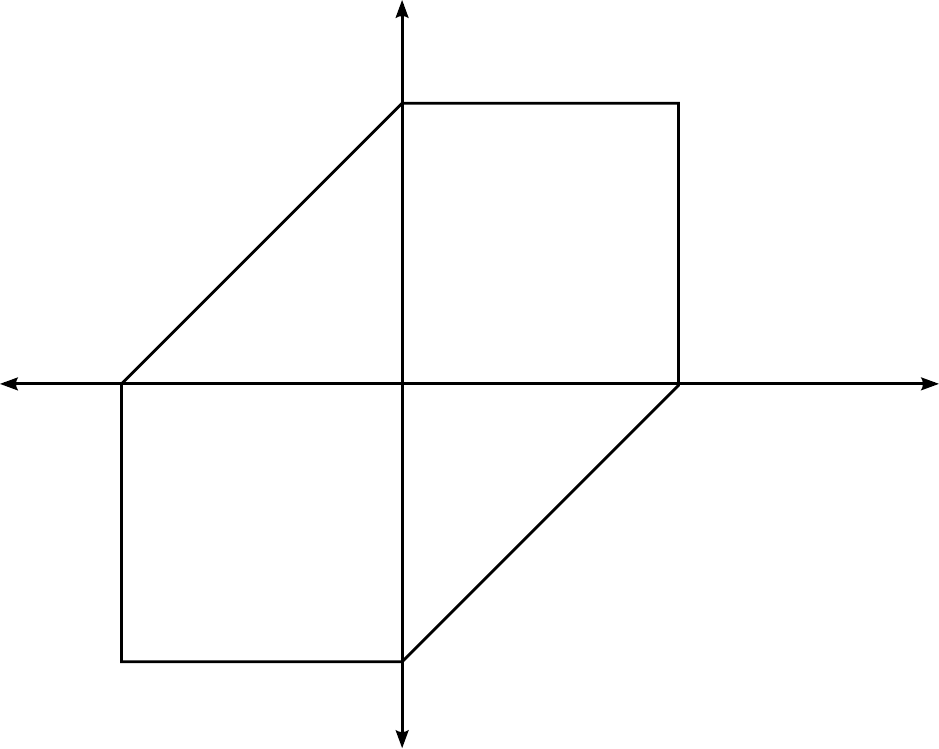}
 \put(33,67){$\sigma_0$}
 \put(73,34){$\sigma_0$}
 \put(45,8){$-\sigma_0$}
 \put(3,42){$-\sigma_0$}
 \put(95,33){$\tilde{\sigma}_1$}
 \put(45,73){$\tilde{\sigma}_2$}
\end{overpic}
\end{center}
\caption{The Tresca hexagon}
\label{fig:tresca_hexagon}
\end{figure}

\begin{proof}[Proof of Theorem~\ref{thm:tresca_dissipation_function}]
By Lemma~\ref{lem:tresca_dissipation_function_intermediate} we have
 \begin{equation*}
  D(\mathbf{p})
  =
  \sup_{\sigma_1,\sigma_2,\sigma_3 \in \R}
  \Big\{p_1 \sigma_1 + p_2 \sigma_2 + p_3 \sigma_3 \; : \; \max_{i,j=1,\dots,3} \abs{\sigma_i - \sigma_j} \le \sigma_0 \Big\}.
 \end{equation*}
We express $\bs{\sigma}$ in new variables
\begin{equation*}
 (\sigma_1, \sigma_2, \sigma_3)
 \mapsto
 \big(\tilde{\sigma}_1 = \sigma_1 - \sigma_3,
      \; \tilde{\sigma}_2 = \sigma_2 - \sigma_3,
      \; \tilde{\sigma}_3 = \sigma_3 - \sigma_3 = 0\big).
\end{equation*}
The scalar product is unchanged under this change of variables.  Indeed,
\begin{align*}
 \sigma_1 p_1 + \sigma_2 p_2 + \sigma_3 p_3
 & =
 (\tilde \sigma_1 + \sigma_3) p_1 + (\tilde \sigma_2 + \sigma_3)p_2 + \sigma_3 p_3 \\
 & =
 \tilde \sigma_1 p_1 + \tilde \sigma_2 p_2 + \sigma_3 (p_1 + p_2 + p_3) \\
 & =
 \tilde \sigma_1 p_1 + \tilde \sigma_2 p_2 \qquad \qquad \quad\, \text{because $\mathbf{p}$ is trace-free}, \\
 & =
 \tilde \sigma_1 p_1 + \tilde \sigma_2 p_2 + \tilde \sigma_3 p_3 \qquad \text{because $\tilde \sigma_3 = 0$}.
\end{align*}
We can therefore write the dissipation function as
\begin{equation*}
 D(\mathbf{p})
 =
  \sup_{\tilde{\sigma}_1,\tilde{\sigma_2},\tilde{\sigma}_3 \in \R}
    \Big\{ \tilde \sigma_1 p_1 + \tilde \sigma_2 p_2 + \tilde \sigma_3 p_3
      \; : \;  \max \{ \abs{\tilde \sigma_1}, \abs{\tilde \sigma_2}, \abs{\tilde \sigma_1 - \tilde \sigma_2 } \} \le \sigma_0,
        \; \tilde \sigma_3 = 0 \Big\}.
\end{equation*}
The maximization problem is now two-dimensional, and the admissible set is the famous Tresca hexagon
(Figure~\ref{fig:tresca_hexagon}).  This hexagon is the convex hull of the six points
\begin{alignat*}{3}
 a_1 &= (\sigma_0,0), &\quad& a_2 = (\sigma_0,\sigma_0), &\quad& a_3 = (0,\sigma_0),\\
 a_4 &= (-\sigma_0,0), & &  a_5 = (-\sigma_0,-\sigma_0), && a_6 = (0,-\sigma_0).
\end{alignat*}
By \cite[page 318]{rockafellar_wets:2010}, we have
\begin{equation*}
 D(\mathbf{p})
 =
 \max \big\{ \langle a_1, (p_1,p_2)\rangle, \dots \langle a_6, (p_1,p_2)\rangle \big\}
\end{equation*}
Hence
\begin{equation*}
 D(\mathbf{p})
 =
 \sigma_0 \max \big\{ p_1, p_1+p_2, p_2, -p_1, -p_1-p_2, -p_2 \big\}.
\end{equation*}
Using that $p_1+p_2+p_3 = 0$ we get
\begin{equation*}
 D(\mathbf{p})
 =
 \sigma_0 \max \big\{ p_1, p_2, p_3, -p_1, -p_2, -p_3 \big\},
\end{equation*}
and hence the assertion.
\end{proof}

\printbibliography

\end{document}